\documentclass[12pt,draftcls,onecolumn]{IEEEtran}%68428,
\usepackage{amsmath}
\usepackage{graphicx}
\usepackage{stmaryrd}
\usepackage{amsfonts}
\usepackage{graphicx,times,amsmath}
\usepackage{graphics,color}
\usepackage{graphicx}
\usepackage{latexsym}
%\usepackage[cmex10]{amsmath}
%\interdisplaylinepenalty=2500
\usepackage{amsfonts}
\usepackage{amssymb}
\usepackage{url}
\usepackage{subfigure}
\usepackage{times}
\usepackage{color}
\usepackage{cite}
\newtheorem{theorem}{Theorem}
\newtheorem{lemma}{Lemma}

\newtheorem{corollary}{Corollary}
\newtheorem{definition}{Definition}
\newtheorem{remark}{Remark}
\newtheorem{assumption}{Assumption}

\DeclareMathOperator{\diag}{diag}
\DeclareMathOperator{\Prb}{\mathbb{P}}
\DeclareMathOperator{\E}{\mathbb{E}}
\DeclareMathOperator{\Sp}{Sp}
\DeclareMathOperator{\ST}{ST}
\begin{document}
%
% paper title
\title{Structure-Based Self-Triggered Consensus in Networks of Multiagents with Switching Topologies}

\author{Bo Liu~\IEEEmembership{Member, IEEE}, Wenlian Lu~\IEEEmembership{Member, IEEE}, Licheng Jiao ~\IEEEmembership{Senior Member, IEEE}, and Tianping Chen
        ~\IEEEmembership{Senior Member, IEEE}% <-this % stops a space
\thanks{This work is jointly supported by the National Basic Research Program (973 Program) of China (No. 2013CB329402), the National Natural Science Foundation of China(No.61473215), The Fund for Foreign Scholars in University Research and Teaching Programs (the 111 Project) (No. B07048), National Science Foundation of China under Grant No. 91438103, 91438201, the Program for Cheung Kong Scholars and Innovative Research Team in University (No. IRT1170), the Fundamental Research Funds for the Central Universities under Grant No. XJS14035. Marie Curie International Incoming Fellowship from the European
Commission (FP7-PEOPLE-2011-IIF-302421), the National Natural Sciences Foundation
of China (Nos. 61273211 and 61273309), and the Program for New Century Excellent Talents in University (NCET-13-0139)
}% <-this % stops a space
\thanks{B. Liu and L. Jiao are with Key Laboratory of Intelligent Perception and Image Understanding of Ministry of Education,
International Research Center for Intelligent Perception and Computation,
Xidian University, Xi'an, Shaanxi Province 710071, China
(email: liub@xidian.edu.cn, lchjiao@mail.xidian.edu.cn); W. Lu is with the Centre for Computational Systems Biology and School of Mathematical Sciences, Fudan University, Shanghai, 200433, China(email: wenlian@fudan.edu.cn); T. Chen is with the School of Computer Science and School of Mathematical Sciences, Fudan University, Shanghai 200433, China(Email: tchen@fudan.edu.cn).}
\thanks{}}
\maketitle

\begin{abstract}
In this paper, we propose a new self-triggered consensus algorithm in networks of multi-agents. Different from existing works, which are based on the observation of states, here, each agent determines its next update time based on its coupling structure. Both centralized and distributed approaches of the algorithms have been discussed. By transforming the algorithm to a proper discrete-time systems without self delays, we established a new analysis framework to prove the convergence of the algorithm. Then we extended the algorithm to networks with switching topologies, especially stochastically switching topologies. Compared to existing works, our algorithm is easier to understand and implement. It explicitly provides positive lower and upper bounds for the update time interval of each agent based on its coupling structure, which can also be independently adjusted by each agent according to its own situation. Our work reveals that the event/self triggered algorithms are essentially discrete and more suitable to a discrete analysis framework. Numerical simulations are also provided to illustrate the theoretical results.

\bigskip
{\bf Key words:} self-triggered, distributed, consensus,  switching topology.
\end{abstract}

\pagestyle{plain}
%\pagenumbering{arabic}

\section{Introduction}\quad

Distributed control of networked cooperative multiagent systems has received much attention in recent years due to the rapid advances in computing and communication technologies. Examples include agreement or consensus problems\cite{RenAtkins_IJRNC_2007,JiEgerstedt_TR_2007,Olfati-SaberMurray_TAC_2004}, in which a group of agents seek to agree upon certain quantities of interest, formation control of robots and vehicles\cite{ConsoliniMorbidiPrattichizzoTosques_Automatica_2008,CaoAndersonMorseYu_CDC_2008}, and distributed estimation\cite{SperanzonFischioneJohansson_CDC_2006,Olfati-SaberShamma_CDC_2005}, etc.

In these applications, an important aspect is to determine the controller actuation schemes. Although continuous feedback of states have always been used in early implementations of distributed control, it is not suitable for agents equipped with embedded microprocessors that have very limited resources to transmit and gather information. To overcome this difficulty, event-triggered control scheme\cite{FanFengWangSong_Automatica_2013,SeybothDimarogonasJohansson_Automatica_2013,LiuChenYuan_JSSC_2012,ManuelTabuada_TAC_2011,WangLemmon_TAC_2011,WangLemmon_CDC_2008,HeemelsSandeeBosch_IJC_2007,Dimarogonas_TAC_2012,Tabuada_TAC_2007} was proposed to reduce the controller updates. In fact, event-driven strategies
for multi-agent systems can be viewed as a linearization and discretization
process, which was considered and investigated in early papers
\cite{LC2004,LC2007}. For example, in the paper \cite{LC2004}, following
algorithm was investigated
\begin{align}
x^{i}(t+1)=f(x^{i}(t))+c_{i}\sum_{j=1}^{m}a_{ij}(f(x^{j}(t)))\label{cml}
\end{align}
which can be considered as nonlinear consensus algorithm. In particular, let $f(x(t))=x(t)$ and $c_{i}=(t_{k+1}^{i}-t_{k}^{i})$.
Then (\ref{cml}) becomes
\begin{align}
x^{i}(t_{k+1}^{i})=x^{i}(t_{k}^{i})+(t_{k+1}^{i}-t_{k}^{i})\sum_{j=1}^{m}a_{ij}x^{j}(t_{k}^{i}),
\end{align}
which is some variant of the event triggering (distributed, self triggered) model for
consensus problem. In centralized control, the bound for
$(t_{k+1}^{i}-t_{k}^{i})=(t_{k+1}-t_{k})$ to reach synchronization was
given in the paper \cite{LC2004} when the coupling graph is indirected and
in \cite{LC2007} for the directed coupling graph.
The key point in event-triggered algorithm is the design of a decision maker that determines when the next actuator update should occur. The existing event-triggered algorithms are all based on the observation of states. For example, in a typical event-triggered algorithm proposed in \cite{Tabuada_TAC_2007}, an update is triggered when a certain error of the states becomes large enough with respect to the norm of the state, which requires a continuous observation of the states. In addition, self-triggered control\cite{AntaTabuada_TAC_2010,ManuelAntaTabuada_Automatica_2010,WangLemmon_TAC_2009,MazoTabuada_CDC_2008,AntaTabuada_PACC_2008} has been proposed as a natural extension of event-triggered control, in which each agent predicts its next update time based on discontinuous state observation to further reduce resource usage for the control systems.

Both the event-triggered and self-triggered control algorithms that have been proposed till now have some drawbacks in common. First, the resulting system in event-triggered control is one that with system delays, especially with self delays, which generally is difficult to handle. And the existing analysis for such algorithms are always based on some quadratic Lyapunov function, which has very strict restrictions on the network structure. For example, to the best of our knowledge, the latest analysis is still restricted to static networks. Second, since these algorithms are based on the observation of states, which are generally much complicated and untraceable, making it difficult to predict and exclude some unexpected possibilities such as the occurrence of zero inter-execution times.

To overcome such difficulties, we proposed a new kind of self-triggered consensus algorithms that are structure-based. That is, each agent predicts its next update time based on its coupling structure with its neighbours instead of the observation of their states. This can bring several advantages. First, since the coupling structure is relatively simpler and more traceable than the states, it is relatively easier to handle. Actually, in our algorithm, the occurrence of zero inter-execution time has been excluded by directly providing a positive lower bound on the update intervals.

Second, although the resulting system of the self-triggered algorithm is one with self delays, by some proper transformation, we showed that the system can corresponding to a discrete-time system without self-delays. We have proposed the algorithm in both centralized and distributed approaches. In the centralized approach, the system can be directly related to a discrete-time system without delays, which can be seen as a discrete version of the nominal system. However, in the distributed approach, the situation is much more complicated and the system can not be directly transformed into its discretized version. However, by some proper indirect transformation, we showed that the convergence of the nominal system can be reduced to that of a discrete-time consensus algorithm with off-diagonal delays but without self delays. Thus, other than using a quadratic Lyapunov function, the convergence of the original algorithm can be solved by the analysis of a discrete-time consensus algorithm. This brings another possibility that is also considered as a big advantage and a major contribution of this work. That is, we can analyze networks with switching topologies, especially stochastically switching topologies.

The rest of the paper is organized as follows: Section \ref{secPreliminaries} provides some preliminaries on matrix and graph theory that will be used in the main text. Section \ref{secSelfTriggered} provides the self-triggered consensus algorithm in both centralized and distributed approach with convergence analysis. Section \ref{secSimulation} provide an example with numerical simulation to illustrate the theoretical results obtained in the previous section. The paper is concluded in Section \ref{secConclusion}.

\section{Preliminaries}\label{secPreliminaries}

In this section, we present some definitions and results in matrix and graph theories and probability theories that will be used later.

For a matrix $A=[a_{ij}]$, $a_{ij}$ represents the entry of $A$ on the $i$th row and $j$th column, which is sometimes also denoted as $[A]_{ij}$. A matrix $A=[a_{ij}]$ is called a {\em nonnegative matrix} if $a_{ij}\ge 0$ for all $i$, $j$. And $A$ is called a {\em stochastic matrix} if $A$ is square and $\sum_{j}a_{ij}=1$ for each $i$. Given a nonnegative matrix $A$ and $\delta>0$, the {\em $\delta$-matrix} of $A$ is a matrix that has nonzero entries only in the place that $A$ has entries equal to or greater than $\delta$. For two matrix $A$, $B$ of the same dimension, we write $A\ge B$ if $A-B$ is a nonnegative matrix. Throughout this paper, we use $\prod_{i=1}^{k}A_{i}=A_{k}A_{k-1}\cdots A_{1}$ to denote the left product of matrices.

A {\em directed graph} $\mathcal{G}$ is defined by its vertex set $\mathcal{V}(\mathcal{G})=\{v_{1},\cdots,v_{n}\}$ and edge set $\mathcal{E}(\mathcal{G})\subseteq \{(v_{i},v_{j}): v_{i}, v_{j}\in \mathcal{V}(\mathcal{G})\}$, where an edge is an ordered pair of vertices in $\mathcal{V}(\mathcal{G})$. If $(v_{i},v_{j})\in \mathcal{E}(\mathcal{G})$, then $v_{i}$ is called the {\em (in-)neighbour} of $v_{j}$. $\mathcal{N}_{i}=\{j:~(v_{j},v_{i})\in \mathcal{E}(\mathcal{G})\}$ is the set of neighbours of agent $i$. A {\em directed path} in $\mathcal{G}$ is an ordered sequence of vertices $v_{1}$, $\cdots$, $v_{s}$ such that $(v_{i},v_{i+1})\in \mathcal{E}(\mathcal{G})$ for $i=1,\cdots,s-1$. A {\em directed tree} is a directed graph where every vertex has exactly one in-neighbour except one ,called the {\em root}, without any in-neighbour. And there exists a directed path from the root to any other vertex of the graph. A subgraph of $\mathcal{G}$ is a directed graph $\mathcal{G}_{S}$ satisfying $\mathcal{V}(\mathcal{G}_{S})\subseteq \mathcal{V}(\mathcal{G})$, and $\mathcal{E}(\mathcal{G})\subseteq \mathcal{E}(\mathcal{G})$. A {\em spanning subgraph} of $\mathcal{G}$ is a subgraph of $\mathcal{G}$ that has the same vertex set with $\mathcal{G}$. We say $\mathcal{G}$ has a {\em spanning tree} if $\mathcal{G}$ has a spanning subgraph that is a tree.

A {\em weighted directed graph} is a directed graph where each edge is equipped with a weight. Thus, a graph $\mathcal{G}$ of $n$ vertices corresponds to an $n\times n$ nonnegative matrix $A=[a_{ij}]$, called the {\em weight matrix} in such way that $(v_{j},v_{i})\in \mathcal{E}(\mathcal{G})$ if and only if $a_{ij}>0$. On the other hand, given an $n\times n$ nonnegative matrix $A$, it corresponds to a weighted directed graph $\mathcal{G}(A)$ such that $\mathcal{G}(A)$ has $A$ as its weight matrix. Using this correspondence, we can introduce the concept of $\delta$-graph. The {\em $\delta$-graph} of $\mathcal{G}$ is a weighted directed graph that has the $\delta$-matrix of $\mathcal{G}$'s weight matrix as its weight matrix. A graph $\mathcal{G}$ has a {\em $\delta$ spanning tree} if its $\delta$-graph has a spanning tree.

From the weight matrix $A=[a_{ij}]$ of a graph $\mathcal{G}$, we define its graph Laplacian $L=[l_{ij}]$ as follows:
\begin{eqnarray*}
  l_{ij}=\left\{
  \begin{array}{cc}
    \sum_{k\ne i}a_{ik}, & i=j;\\
    -a_{ij}, & i\ne j.
  \end{array}
  \right.
\end{eqnarray*}
Thus $L$ has zero row sum with $l_{ii}>0$ while $l_{ij}\le 0$ for $i,j=1,2,\cdots,n$ and $i\ne j$. And the nonnegative linear combination of several graph Laplacians is also a graph Laplacian of some graph. There is a one-to-one correspondence between a graph and its weight matrix or its Laplacian matrix. In the following, for the sake of simplicity in presentation, sometimes we don't explicitly distinguish a graph from its weight matrix or Laplacian matrix, i.e., when we say a matrix has some property that is usually associated with a graph, what we mean is that property is held by the graph corresponding to this matrix. For example, when we say a nonnegative matrix $A$ has a spanning tree, what we mean is that the graph of $A$ has a spanning tree. And it is of similar meaning when we say that a graph has some property that is usually associated with a matrix.

Let $\{\Omega, \mathcal{F},\Prb\}$ be a probability space, where $\Omega$ is the sample space, $\mathcal{F}$ is $\sigma$-algebra on $\Omega$, and $\Prb$ is the probability on $\mathcal{F}$. We use $\E\{\cdot\}$ to denote the mathematical expectation and $\E\{\cdot|\mathcal{F}\}$ the conditional expectation with respect to $\mathcal{F}$, i.e., $\E\{\cdot|\mathcal{F}\}$ is a random variable that is measurable with respect to $\mathcal{F}$.

\begin{definition}[adapted process/sequence]
Let $\{A_{k}\}$ be a stochastic process defined on the basic
probability space $\{\Omega,\mathcal{F},\Prb\}$, and let
$\{\mathcal{F}_{k}\}$ be a filtration, i.e., a sequence of nondecreasing
sub-$\sigma$-algebras of $\mathcal{F}$. If $A_{k}$ is measurable with respect to
$\mathcal{F}_{k}$, then the sequence
$\{A_{k},\mathcal{F}_{k}\}$ is called an adapted process/sequence.
\end{definition}
\begin{remark}
For example, let $X(\omega)$ be a random variable defined on $\Omega$, then $\{\E\{X(\omega)|\mathcal{F}_{k}\},\mathcal{F}_{k}\}$ is an adapted sequence.
\end{remark}

\section{Self-Triggered Consensus Algorithm}\label{secSelfTriggered}
In a network of $n$ agents, with $x_{i}\in \mathbb{R}$ being the state of agent $i$,
we assume that each agent's dynamics obey a single integrator model
\begin{eqnarray}
  \dot{x}_{i}(t)=u_{i}(t), ~~i=1,2,\cdots,n,
\end{eqnarray}
where $u_{i}(t)$ is the control law for agent $i$ at time $t$. In consensus algorithm, the control law is usually given by(\cite{Olfati-SaberMurray_TAC_2004,FaxMurray_IFAC_2002,DimarogonasKyriakopoulos_Automatica_2008})
\begin{eqnarray}
  u_{i}(t)=-\sum_{j\in \mathcal{N}_{i}}l_{ij}[x_{j}(t)-x_{i}(t)]
  =-\sum_{j=1}^{n}l_{ij}[x_{j}(t)-x_{i}(t)].
\end{eqnarray}
Thus the consensus algorithm can be written as
\begin{eqnarray}
  \dot{x}_{i}(t)=-\sum_{j=1}^{n}l_{ij}[x_{j}(t)-x_{i}(t)],
\end{eqnarray}
or in matrix form as
\begin{eqnarray}
  \dot{x}(t)=-Lx(t),
\end{eqnarray}
where $x(t)=[x_{1}(t),\cdots, x_{n}(t)]^{\top}\in \mathbb{R}^{n}$ is the stack vector of the agents' states. In event/self-triggered control, the control law $u(t)$ is piecewise constant, i.e.,
\begin{eqnarray}
  u(t)=u(t_{k}), ~t\in [t_{k},t_{k+1}),
\end{eqnarray}
where $\{t_{k}\}$ is the time sequence that an update of $u(\cdot)$ occurs. The central point of such algorithm is the choice of appropriate $\{t_{k}\}$ such that some desired properties of the algorithm such as stability and convergence can be preserved. This can be done in a centralized approach or a distributed approach, both of which will be discussed in the following.

\subsection{Centralized Approach}

In this subsection, we first consider centralized self-triggered consensus algorithm. The sequence of the update time is denoted by: $t_{0}, t_{1}, t_{2}, \cdots$, then the self-triggered algorithm has the form:
\begin{eqnarray}\label{eqnCentralizedFix}
  \dot{x}(t)=-Lx(t_{k}),~t\in [t_{k},t_{k+1}).
\end{eqnarray}

Denote $\Delta t_{k}=t_{k+1}-t_{k}$ for $k=0,1,\cdots$. We have the following Theorem.

\begin{theorem}\label{thmCentralFixed}
If the graph $\mathcal{G}(L)$ has a spanning tree and there exists $\delta\in (0,1/2)$ such that $\delta/l_{\max}\le\Delta t_{k}\le (1-\delta)/l_{\max}$ for each $k$, where $l_{\max}=\max_{i}\{l_{ii}\}$, then the algorithm will achieve consensus asymptotically.
\end{theorem}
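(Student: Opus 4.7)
The plan is to collapse the continuous-time self-triggered dynamics onto the sampling grid $\{t_k\}$ and reduce the problem to the convergence of a backwards product of stochastic matrices that share a uniform spanning-tree structure. The first step is to integrate $\dot x(t)=-Lx(t_k)$ on each interval $[t_k,t_{k+1})$, where the right-hand side is constant; this gives the clean recursion
\begin{equation*}
x(t_{k+1}) = \bigl(I - \Delta t_k L\bigr)\, x(t_k) \definenote P_k\, x(t_k),
\end{equation*}
so that $x(t_k) = \bigl(\prod_{j=0}^{k-1} P_j\bigr) x(t_0)$ and asymptotic consensus becomes equivalent to showing $\prod_{j=0}^{k-1} P_j \to \mathbf{1} c^\top$ as $k\to\infty$ for some probability vector $c$.

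Next I would verify that $\{P_k\}$ is a sequence of stochastic matrices whose $\eta$-graphs all contain a common spanning tree. Row sums equal $1$ because $L\mathbf{1}=0$. Off-diagonal entries $[P_k]_{ij}=-\Delta t_k\, l_{ij}\ge 0$, and whenever $l_{ij}<0$ the lower bound $\Delta t_k \ge \delta/l_{\max}$ gives $[P_k]_{ij}\ge (\delta/l_{\max})|l_{ij}|$. Diagonal entries satisfy $[P_k]_{ii} = 1-\Delta t_k\, l_{ii} \ge 1-(1-\delta)l_{ii}/l_{\max}\ge \delta$, using $l_{ii}\le l_{\max}$ and $\Delta t_k\le (1-\delta)/l_{\max}$. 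Taking $\eta>0$ to be the minimum of $\delta$ and $(\delta/l_{\max})\min_{i\ne j,\, l_{ij}<0}|l_{ij}|$, which is positive and independent of $k$, the $\eta$-graph of every $P_k$ contains all self-loops together with every edge of $\mathcal{G}(L)$, and therefore inherits the spanning tree of $\mathcal{G}(L)$.

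Third I would invoke the classical Wolfowitz/Hajnal-type convergence theorem for backwards products of stochastic matrices: if every $P_k$ has positive diagonal and its $\eta$-graph has a spanning tree for a common $\eta>0$, then $\prod_{j=0}^{k-1}P_j\to \mathbf{1} c^\top$ geometrically. This delivers $x(t_k)\to \alpha\mathbf{1}$ with $\alpha = c^\top x(t_0)$. For intermediate $t\in[t_k,t_{k+1})$, the identity $x(t)=x(t_k)-(t-t_k)Lx(t_k)$ combined with $(t-t_k)\le(1-\delta)/l_{\max}$ and $Lx(t_k)\to L(\alpha\mathbf{1})=0$ yields $x(t)\to \alpha\mathbf{1}$, completing the proof.

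The main obstacle I anticipate is the uniformity in the second step, namely exhibiting a single positive $\eta$ that works for all $k$; this is precisely where the \emph{two-sided} hypothesis $\delta/l_{\max}\le\Delta t_k\le(1-\delta)/l_{\max}$ matters. The upper bound is needed to keep diagonal entries bounded below by $\delta$, while the lower bound prevents the off-diagonal edges of $\mathcal{G}(L)$ from collapsing to zero along the sequence $\{P_k\}$. Once such a uniform $\eta$ is secured, the conclusion is a direct appeal to a known product-of-stochastic-matrices result rather than a construction of a quadratic Lyapunov function, which is consistent with the paper's stated philosophy of trading Lyapunov analysis for a discrete framework.
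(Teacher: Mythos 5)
Your proposal is correct and follows essentially the same route as the paper: integrate over each interval to obtain $x(t_{k+1})=(I-\Delta t_k L)x(t_k)$, verify that these are stochastic matrices with diagonal entries bounded below by $\delta$ and nonzero off-diagonal entries uniformly bounded below (thanks to the two-sided bound on $\Delta t_k$), and invoke a standard convergence result for backward products of stochastic matrices sharing a spanning tree. The only cosmetic difference is the treatment of intermediate times $t\in[t_k,t_{k+1})$, where the paper appeals to monotonicity of $\max_i x_i(t)$ and $\min_i x_i(t)$ while you use the explicit interpolation $x(t)=x(t_k)-(t-t_k)Lx(t_k)$ together with $Lx(t_k)\to 0$; both are valid.
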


\begin{proof}
From the algorithm, for each $t\in (t_{k},t_{k+1}]$, since
$$\dot{x}(t)=-Lx(t_{k}),$$ we have
$$x(t)=x(t_{k})-(t-t_{k})Lx(t_{k})=[I-(t-t_{k})L]x(t_{k}).$$
Particularly, let $t=t_{k+1}$, we have
$$x(t_{k+1})=(I-\Delta t_{k}L)x(t_{k})= A_{k}x(t_{k}),$$
where $A_{k}=I-\Delta t_{k}L$. It is easy to verify that $A_{k}$ is a stochastic matrix for each $k$. In fact, since $\delta/l_{\max}\le\Delta t_{k}\le (1-\delta)/l_{\max}$,
$$[A_{k}]_{ii}=1-\Delta t_{k}l_{ii}\ge 1-\Delta t_{k}l_{\max}\ge 1-\frac{1-\delta}{l_{\max}}l_{\max}=\delta.$$ For $i\ne j$,
$$[A_{k}]_{ij}=-\Delta t_{k} l_{ij}\ge 0.$$ It implies that $[A_{k}]_{ij}$ is positive if and only if $-l_{ij}$ is positive, and the positive elements of $[A_{k}]$ is uniformly lower bounded by a positive scalar since both $\Delta t_{k}$ and $-l_{ij}$ is uniformly lower bounded by a positive scalar. Furthermore,
$\sum_{j=1}^{n}[A_{k}]_{ij}=1-\Delta t_{k}\sum_{j=1}^{n}l_{ij}=1.$
By a standard argument from the theory of products of stochastic matrices, there exists $x^{*}\in \mathbb{R}$ such that
$$\lim_{k\to+\infty}x_{i}(t_{k})=x^{*}.$$
On the other hand, it is not difficult to verify that the function $\max_{i}\{x_{i}(t)\}$ is nonincreasing and $\min_{i}\{x_{i}(t)\}$ is nondecreasing. Thus,
$$\lim_{t\to+\infty}x_{i}(t)=x^{*}.$$
\end{proof}

Using a similar argument as that in Theorem \ref{thmCentralFixed}, it is not difficult to propose a centralized event-triggered algorithm on networks with stochastic switching topologies. For example, consider the following consensus algorithm:
\begin{eqnarray}\label{eqnCentralSwitching}
  \dot{x}(t)=-L^{k}x(t_{k}),~t\in [t_{k},t_{k+1}),
\end{eqnarray}
where $L^{k}=[l_{ij}^{k}]$ is uniformly bounded in $k$ and $l_{\max}^{k}=\max_{i}\{l_{ii}^{k}\}\in [l_{\min},l_{\max}]$ for some $0<l_{\min}<l_{\max}$. Before provide the next theorem, we first summarize the following assumption.

\begin{assumption}\label{assumCentralSwitching}
\begin{enumerate}
  \item[(i)] There exists $\delta\in (0,1/2)$ such that $\Delta t_{k}\in [\delta/l_{\max}^{k},(1-\delta)/l_{\max}^{k}]$;
  \item[(ii)]$\{L^{k},\mathcal{F}_{k}\}$ is an adapted random sequence, and there exists $h>0$, $\delta'>0$ such that the graph corresponding to the conditional expectation $\E\{\sum_{m=k+1}^{k+h}L^{m}|\mathcal{F}_{k}\}$ has a $\delta'$-spanning tree;
\end{enumerate}
\end{assumption}
Now we have
\begin{theorem}
Under assumption\ref{assumCentralSwitching}, the self-triggered consensus algorithm \eqref{eqnCentralSwitching} will reach consensus almost surely.
\end{theorem}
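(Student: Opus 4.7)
The plan is to extend the proof of Theorem~\ref{thmCentralFixed} to the random setting by reducing almost-sure consensus to the convergence of a random left product of adapted stochastic matrices. Integrating \eqref{eqnCentralSwitching} on $[t_k,t_{k+1})$ yields
$$x(t_{k+1}) = (I - \Delta t_k L^k)\,x(t_k) = A_k\, x(t_k), \qquad A_k := I - \Delta t_k L^k,$$
so that $x(t_k) = A_{k-1}\cdots A_0\, x(t_0)$. Assumption~\ref{assumCentralSwitching}(i), combined with $l_{\max}^k \le l_{\max}$, implies, exactly as in Theorem~\ref{thmCentralFixed}, that each $A_k$ is stochastic with $[A_k]_{ii}\ge\delta$ uniformly; and since both $L^k$ and $\Delta t_k$ are $\mathcal{F}_k$-measurable, $\{A_k,\mathcal{F}_k\}$ is an adapted sequence of stochastic matrices.

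The crux of the proof is to push the $\delta'$-spanning-tree hypothesis on $\E\{\sum_{m=k+1}^{k+h} L^m\mid\mathcal{F}_k\}$ through to a uniform spanning-tree lower bound for the conditional expectation of the block product. Writing $A_m = \delta I + (A_m-\delta I)$ with $A_m - \delta I\ge 0$ and expanding $(\delta I+(A_{k+h}-\delta I))\cdots (\delta I+(A_{k+1}-\delta I))$ keeps only nonnegative summands, which gives the pathwise entrywise bound
$$A_{k+h}\cdots A_{k+1} \;\ge\; \delta^{h-1}\sum_{m=k+1}^{k+h}(A_m - \delta I).$$
For $i\neq j$ this reads $[A_{k+h}\cdots A_{k+1}]_{ij} \ge \delta^{h-1}\sum_{m=k+1}^{k+h}(-\Delta t_m\, l_{ij}^m)$. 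Since $\Delta t_m \ge \delta/l_{\max}^m \ge \delta/l_{\max}$ and $-l_{ij}^m\ge 0$ for $i\neq j$, one has the pathwise bound $-\Delta t_m l_{ij}^m\ge (\delta/l_{\max})(-l_{ij}^m)$. Taking $\E\{\cdot\mid\mathcal{F}_k\}$ and invoking Assumption~\ref{assumCentralSwitching}(ii) shows that, for every edge of the $\delta'$-spanning tree of $\E\{\sum_{m=k+1}^{k+h} L^m\mid\mathcal{F}_k\}$, the corresponding off-diagonal entry of $\E\{A_{k+h}\cdots A_{k+1}\mid\mathcal{F}_k\}$ is at least $\delta^h\delta'/l_{\max}$, a positive constant independent of $k$.

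This block-wise spanning-tree bound on the conditional expectation, combined with the uniform diagonal lower bound $[A_k]_{ii}\ge\delta$, is precisely the hypothesis of the standard Hajnal-type convergence theorem for products of adapted random stochastic matrices: one concludes that $A_{k-1}\cdots A_0$ converges almost surely to a random rank-one stochastic matrix, so $x(t_k)\to x^*$ almost surely for some scalar random variable $x^*$. Finally, on each interval $[t_k,t_{k+1})$ we have $x(t) = [I-(t-t_k)L^k]x(t_k)$ with $t-t_k\le (1-\delta)/l_{\max}^k$, so $I-(t-t_k)L^k$ is itself stochastic; hence $\max_i x_i(t)$ is nonincreasing and $\min_i x_i(t)$ is nondecreasing in $t$, and the convergence at the sample times upgrades to $x(t)\to x^*$ almost surely. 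The main obstacle is the second step: because $\Delta t_m$ is $\mathcal{F}_m$-measurable and in general correlated with $L^m$, one cannot pull $\Delta t_m$ across a conditional expectation, and the argument rests on the deterministic pathwise lower bound $\Delta t_m\ge \delta/l_{\max}$ to decouple them while preserving the off-diagonal sign structure needed for the spanning-tree transfer.
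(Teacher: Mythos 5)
Your proposal is correct and follows essentially the same route as the paper: reduce to the adapted product $x(t_{k+1})=A_{k}x(t_{k})$ with $A_{k}=I-\Delta t_{k}L^{k}$ stochastic and $[A_{k}]_{ii}\ge\delta$, decouple $\Delta t_{m}$ from $L^{m}$ via the pathwise bound $\Delta t_{m}\ge\delta/l_{\max}$, transfer the $\delta'$-spanning tree of $\E\{\sum_{m}L^{m}\,|\,\mathcal{F}_{k}\}$ to a conditional-expectation spanning-tree bound, and invoke the adapted-stochastic-matrix consensus theorem (Theorem 3.1 of \cite{LiuB_SICON_2011}). The only cosmetic difference is that you verify the condition for the conditional expectation of the \emph{product} $\E\{A_{k+h}\cdots A_{k+1}\,|\,\mathcal{F}_{k}\}$ via the expansion $A_{m}=\delta I+(A_{m}-\delta I)$, whereas the paper checks it more directly for the \emph{sum} $\E\{\sum_{m=k+1}^{k+h}A_{m}\,|\,\mathcal{F}_{k}\}$, which is the form in which the cited theorem's hypothesis is actually stated.
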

\begin{proof}
Similar as in the proof of Theorem \ref{thmCentralFixed}, we have $x(t_{k+1})=A_{k}x(t_{k})$ with $A_{k}=I-\Delta t_{k}L^{k}$. Since $\{\Delta t_{k}\}$ is a deterministic sequence, $\{A_{k},\mathcal{F}_{k}\}$ is also an adapted sequence, and
\begin{eqnarray*}
\E\{\sum_{m=k+1}^{k+h}A_{m}|\mathcal{F}_{k}\}&=&hI-\E\{\sum_{m=k+1}^{k+h}\Delta t_{m}L^{m}|\mathcal{F}_{k}\}\\
&\ge&h\delta I-\frac{\delta}{l_{\max}}\E\{\sum_{m=k+1}^{k+h}L_{0}^{m}|\mathcal{F}_{k}\},
\end{eqnarray*}
where $[L^{m}_{0}]_{ij}=[L^{m}]_{ij}$ for $i\ne j$ and $[L_{0}^{m}]_{ii}=0$ for all $i$. Thus, $\E\{\sum_{m=k+1}^{k+h}A_{m}|\mathcal{F}_{k}\}$ has a $\delta''$-spanning tree with $\delta''=\delta'\delta/l_{\max}>0$. From Theorem 3.1 of \cite{LiuB_SICON_2011}, the sequence $\{x(t_{k})\}$ will reach consensus almost surely, thus $x(t)$ will also reach consensus almost surely.
\end{proof}
As corollaries in special cases, it can be shown that almost sure consensus still holds if we replace item (ii) in Assumption \ref{assumCentralSwitching} with one of the following more special ones.
\begin{enumerate}
  \item[(ii')]$\{L^{k}\}$ is an independent and identically distributed sequence, and there exists $\delta'>0$ such that the graph corresponding to the expectation $\E L^{k}$ has a $\delta'$-spanning tree;
  \item[(ii'')]$\{L^{k}\}$ is a homogenous Markov chain with a stationary distribution $\pi$, and there exists $\delta'>0$ such that the graph corresponding to the expectation with respect to $\pi$, $\E_{\pi}L^{k}$, has a $\delta'$-spanning tree;
\end{enumerate}

\subsection{Distributed Approach}

In this section, we discuss the distributed event-triggered consensus algorithm.

Let $\{t_{i}^{k}\}_{k=0}^{+\infty}$ be the time sequence such that $t_{i}^{k}$ is the $k$th time that agent $i$ updates its control law. Then the distributed self-triggered algorithm has the following form:
\begin{eqnarray}
  x_{i}(t)=-\sum_{j=1}^{n}l_{ij}x_{j}(t_{j}^{k_{j}(t)}),~~i=1,\cdots,n,
\end{eqnarray}
where $k_{j}(t)=\max\{k:~t_{j}^{k}\le t\}$.

Denote $\Delta t_{i}^{k}=t_{i}^{k+1}-t_{i}^{k}$. Then we have the following Theorem.

\begin{theorem}\label{thmDistributedFixTopology}
If the graph $\mathcal{G}(L)$ has a spanning tree and there exists $\delta_{i}\in (0,1/2)$ for $i=1,2,\cdots,n$ such that $\Delta t_{i}^{k}\in [\delta_{i}/l_{ii},(1-\delta_{i})/l_{ii}]$, then the distributed event triggered algorithm can reach a consensus as $t\to \infty$.
\end{theorem}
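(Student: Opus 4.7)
The plan is to follow the reduction suggested in the introduction: lift the continuous-time self-triggered system to a discrete-time consensus recursion with bounded off-diagonal delays but no self-delay, and then apply a delayed analogue of the product-of-stochastic-matrices argument used in Theorem~\ref{thmCentralFixed}. The spanning-tree assumption on $\mathcal{G}(L)$ supplies the required connectivity, while the two-sided bounds $\Delta t_i^k\in[\delta_i/l_{ii},(1-\delta_i)/l_{ii}]$ provide both uniform positivity of the active edge weights and uniform boundedness of the delays.

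First I would integrate $\dot{x}_i$ over the single interval $[t_i^k,t_i^{k+1})$, on which the control is held constant at its value at $t_i^k$. Writing $\sigma_j(k,i)$ for the most recent update index of agent $j$ not later than $t_i^k$, this gives
\begin{equation*}
x_i(t_i^{k+1})=(1-\Delta t_i^k l_{ii})\,x_i(t_i^k)+\Delta t_i^k\sum_{j\ne i}(-l_{ij})\,x_j\bigl(t_j^{\sigma_j(k,i)}\bigr).
\end{equation*}
Because $\sigma_i(k,i)=k$, there is no self-delay; because $\Delta t_i^k l_{ii}\in[\delta_i,1-\delta_i]$, the self-coefficient is at least $\delta_i$ and each active off-diagonal coefficient is at least $\delta_i(-l_{ij})/l_{ii}$; and because $\sum_j l_{ij}=0$, the row sums to $1$. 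So at every own-update of agent $i$ the state evolves by a convex combination whose weights are uniformly bounded away from $0$ on the edges of $\mathcal{G}(L)$, with the current own value entering with positive weight.

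Second I would bound the delays $t_i^k-t_j^{\sigma_j(k,i)}$ uniformly by $\max_j(1-\delta_j)/l_{jj}$, which follows directly from the upper bound on each agent's own interevent interval. Then I would pass to the union time grid $\{s_\ell\}=\bigcup_{i,k}\{t_i^k\}$ and form an augmented state $z(\ell)$ consisting of the states of all agents at times $s_\ell,s_{\ell-1},\dots,s_{\ell-D}$, with $D$ chosen to absorb the maximum delay on this grid. The dynamics become an un-delayed linear recursion $z(\ell+1)=P_\ell z(\ell)$ with $P_\ell$ row-stochastic, and over any window of length $H$ large enough for every agent to update at least once, one shows that the product $P_{\ell+H-1}\cdots P_\ell$ has a pattern containing a spanning tree of $\mathcal{G}(L)$ with entries bounded below by some $\eta>0$. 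A Wolfowitz-type theorem for ergodic products of row-stochastic matrices with a common spanning-tree pattern then yields $z(\ell)\to c\mathbf{1}$ componentwise, and hence $x_i(t_i^k)\to x^*$ for a common $x^*$.

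Finally, to lift consensus from the discrete update times to all of $t\ge 0$, I would note that on $[t_i^k,t_i^{k+1})$ the trajectory $x_i(t)$ is affine with endpoints both tending to $x^*$ and with the interval length bounded, so $x_i(t)\to x^*$ for every $i$. The main obstacle lies in the third step: unlike the centralized case one cannot write a single $A_k=I-\Delta t_k L$, and after the state augmentation one has to verify that the composite graph of $P_{\ell+H-1}\cdots P_\ell$ still carries a uniformly positive spanning tree. The bookkeeping for this---tracking which past layer of $z$ feeds into which coordinate and checking that paths of the original $\mathcal{G}(L)$ survive the shift structure, using the positive self-coefficients at every layer to propagate weight forward in time---is the technical core of the proof.
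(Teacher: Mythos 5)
Your overall strategy coincides with the paper's: reduce the self-triggered dynamics to a discrete-time consensus recursion with bounded off-diagonal delays but no self-delay, augment the state over the union update grid, and apply a Wolfowitz-type product-of-stochastic-matrices argument; your final lifting step is exactly the paper's first lemma. Two issues, however, deserve attention.

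First, your opening integration assumes the control of agent $i$ is held constant on all of $[t_i^k,t_i^{k+1})$ at its value at $t_i^k$. For the algorithm as defined, $\dot x_i(t)=-\sum_j l_{ij}x_j(t_j^{k_j(t)})$ with $k_j(t)=\max\{k:\,t_j^k\le t\}$, so agent $i$'s drift changes each time a neighbour $j$ broadcasts inside $(t_i^k,t_i^{k+1})$; it is piecewise constant only on the union grid. The correct one-step formula therefore distributes the total weight $\Delta t_i^k(-l_{ij})$ over several delayed copies of $x_j$, one per union-grid subinterval, rather than concentrating it on $x_j(t_j^{\sigma_j(k,i)})$; this is precisely the paper's $y_i(k+1)=\sum_{m=0}^{d_{ik}}\sum_j a_{ij}^m(k)\,y_j(k-m)$ with $a_{ij}^m(k)=-\Delta t_{k-m}l_{ij}$. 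Your self-coefficient, row-sum, and lower-bound computations survive unchanged (the self-terms telescope because agent $i$'s broadcast value is constant between its own updates), so the slip is repairable, but as written you analyze a different sampling rule.

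Second, the step you explicitly defer as ``the technical core''---showing that the augmented window products $P_{\ell+H-1}\cdots P_\ell$ carry a uniformly positive spanning tree despite the zero-diagonal shift rows---is where essentially all of the difficulty lives, and a spanning tree in the product is not by itself sufficient for ergodicity: one needs the root of that tree to carry a self-loop so that each window product is SIA (Lemma~\ref{lemSpanningTreeToSIA} via Lemmas~\ref{lemHighDimensionSpanningTree} and~\ref{lemProdHighDimensionSpanningTree}), and then Wolfowitz's finite-type argument (Lemma~\ref{lemSIAtoScrambling}) to convert sufficiently long runs of SIA blocks into $\delta$-scrambling ones before Lemma~\ref{lemContractionOfProduct} applies. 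The paper devotes Theorem~\ref{thmAppendixMain} and Corollary~\ref{corollaryAppendixDeterministic} to exactly this. So your proposal correctly identifies the route the paper takes, but leaves its hardest segment unexecuted and slightly understates what must be verified there.
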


The proof will be divided into two steps corresponding to two lemmas. Now we give and prove the first lemma.
\begin{lemma}
If there exists $x^{*}\in \mathbb{R}$ such that
$$
\lim_{k\to \infty}x(t_{i}^{k})=x^{*}, ~~i=1,2,\cdots,n,
$$
then the distributed event-triggered algorithm will reach a consensus, i.e.,
$$
\lim_{t\to\infty}x_{i}(t)=x^{*}, ~~i=1,2,\cdots,n.
$$
\end{lemma}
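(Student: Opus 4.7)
The strategy is to bound, on each inter-update interval $[t_i^k,t_i^{k+1})$, the deviation $|x_i(t)-x_i(t_i^k)|$ by a quantity that tends to zero, and then combine this with the hypothesis $x_i(t_i^k)\to x^*$ via the triangle inequality. The decisive algebraic fact is the Laplacian row-sum identity $\sum_j l_{ij}=0$, which lets me rewrite the piecewise-constant right-hand side of the dynamics as
\[
\dot x_i(s) \;=\; -\sum_{j=1}^n l_{ij}\bigl(x_j(t_j^{k_j(s)})-x^*\bigr),
\]
so that whenever every sample $x_j(t_j^{k_j(s)})$ is close to $x^*$, the instantaneous drift is small.

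Integrating over $[t_i^k,t]$ and using $\sum_{j}|l_{ij}|=2l_{ii}$ together with the upper bound $\Delta t_i^k\le (1-\delta_i)/l_{ii}$ gives
\[
|x_i(t)-x_i(t_i^k)| \;\le\; 2(1-\delta_i)\sup_{s\in[t_i^k,t_i^{k+1})}\max_{1\le j\le n}\bigl|x_j(t_j^{k_j(s)})-x^*\bigr|,
\]
so once all recent neighbor samples lie within $\varepsilon$ of $x^*$, the running trajectory lies within $2(1-\delta_i)\varepsilon$ of its last sampled value. It remains to show that the supremum above vanishes as $t\to\infty$. The lower bound $\Delta t_j^k\ge\delta_j/l_{jj}>0$ forces $t_j^k\to\infty$, hence $k_j(s)\to\infty$ as $s\to\infty$ for every agent $j$. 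Given $\varepsilon>0$, the hypothesis $x_j(t_j^k)\to x^*$ applied to each of the finitely many agents lets me pick a single threshold $K$ with $|x_j(t_j^\ell)-x^*|<\varepsilon$ for all $\ell\ge K$ and all $j$; then for $s$ large enough that every $k_j(s)\ge K$, the supremum is bounded by $\varepsilon$. Combining this with $x_i(t_i^{k_i(t)})\to x^*$ finishes the argument.

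I expect no analytic obstacle: the proof rests on two ingredients already granted by the standing assumptions, namely a uniform positive lower bound on every inter-update interval (to make $k_j(s)\to\infty$ for all $j$) and a uniform upper bound (to keep the gain $\Delta t_i^k\,l_{ii}$ strictly below $1$). The only genuine care required is bookkeeping across three indexing layers simultaneously, the continuous time $t$, the per-agent update times $t_i^k$, and the cross-agent sample indices $t_j^{k_j(s)}$, which is handled by choosing a common threshold $K$ valid across all $n$ finite sequences.
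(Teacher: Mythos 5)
Your proposal is correct and follows essentially the same route as the paper: both exploit the zero row-sum of $L$ to express the piecewise-constant drift in terms of deviations of the sampled states from a reference value, use the positive lower bound on the inter-update intervals to get $k_j(s)\to\infty$ and hence vanishing drift, and then bound $|x_i(t)-x_i(t_i^{k_i(t)})|$ by $\Delta t_i^{k_i(t)}\le (1-\delta_i)/l_{ii}$ times the supremum of $|\dot x_i|$ before applying the triangle inequality. The only difference is presentational — you carry the explicit constant $2(1-\delta_i)$ and the $\varepsilon$--$K$ bookkeeping where the paper simply passes to the limit of $\dot x_i(t)$ — so no further comment is needed.
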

\begin{proof}
By the definition of the algorithm, for agent $i$,
$$
\dot{x}_{i}(t)=-\sum_{j=1}^{n}l_{ij}x(t_{j}^{k_{j}(t)})=-\sum_{j=1,j\ne i}^{n}l_{ij}[x_{j}(t_{j}^{k_{j}(t)})-x_{i}(t_{i}^{k_{i}(t)})].
$$
Since $\lim_{t\to\infty}k_{j}(t)=+\infty,$ $$
\lim_{t\to\infty}\dot{x}_{i}(t)=-\sum_{j=1,j\ne i}^{n}l_{ij}(x^{*}-x^{*})=0.
$$
Thus,
\begin{eqnarray*}
\lim_{t\to\infty}|x_{i}(t)-x^{*}|&\le & \lim_{t\to\infty}|x_{i}(t)-x_{i}(t_{i}^{k_{i}(t)})|
+\lim_{t\to\infty}|x_{i}(t_{i}^{k_{i}(t)})-x^{*}|\\
&\le&\lim_{t\to\infty}\Delta {t_{i}^{k_{i}(t)}}\max_{s\in [t_{i}^{k_{i}(t)},t]}|\dot{x}_{i}(s)|+\lim_{t\to\infty}|x_{i}(t_{i}^{k_{i}(t)})-x^{*}|\\
&\le& \frac{1-\delta_{i}}{l_{ii}}\lim_{t\to\infty}\max_{s\in [t_{i}^{k_{i}(t)},t]}|\dot{x}_{i}(s)|+\lim_{t\to\infty}|x_{i}(t_{i}^{k_{i}(t)})-x^{*}|\\
&=&0.
\end{eqnarray*}

\end{proof}

Now we are to provide and prove the second lemma.

\begin{lemma}
Under the assumption in Theorem \ref{thmDistributedFixTopology}, there exists $x^{*}$ such that
$$
\lim_{k\to\infty}x_{i}(t_{i}^{k})=x^{*}.
$$
\end{lemma}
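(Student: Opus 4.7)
The plan is to reduce the continuous-time asynchronous update rule to a discrete-time consensus algorithm with off-diagonal delays but no self-delays, and then invoke a convergence result for such systems.

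First, I would integrate the ODE over each interval $[t_i^k, t_i^{k+1}]$. Since $x_j(t_j^{k_j(s)})$ is piecewise constant in $s$, integration yields
\begin{equation*}
x_i(t_i^{k+1}) = (1 - l_{ii}\Delta t_i^k)\, x_i(t_i^k) + \sum_{j\ne i} (-l_{ij}) \sum_{k'} \alpha_{ij}^{k,k'}\, x_j(t_j^{k'}),
\end{equation*}
where $\alpha_{ij}^{k,k'}$ is the length of the overlap $[t_j^{k'}, t_j^{k'+1})\cap [t_i^k, t_i^{k+1})$. A direct calculation verifies that these coefficients are nonnegative and sum to $1$, so $x_i(t_i^{k+1})$ is a convex combination of its own most recent sample $x_i(t_i^k)$ (which appears without delay) and delayed neighbour samples.

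Next, I would merge the update events of all agents into a single increasing global sequence $\{T_m\}$ and cast the evolution as an asynchronous discrete-time consensus system. The hypothesis $\Delta t_i^k \in [\delta_i/l_{ii}, (1-\delta_i)/l_{ii}]$ provides three crucial quantitative bounds: the self-coefficient $1 - l_{ii}\Delta t_i^k$ lies in $[\delta_i, 1-\delta_i]$, and is therefore bounded away from $0$ and $1$; the aggregate off-diagonal weight on each active edge in a single update is $(-l_{ij})\Delta t_i^k \ge (-l_{ij})\delta_i/l_{ii}$, hence uniformly positive; and the number of global steps between consecutive updates of any fixed agent is uniformly bounded, so the delay horizon is bounded as well. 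Together these ensure that, over any window of bounded global length, the cumulative update matrix contains a weighted directed graph dominating $\mathcal{G}(L)$ by a fixed positive scalar, so it inherits a uniform spanning tree from the hypothesis on $\mathcal{G}(L)$.

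Finally, I would apply a convergence theorem for asynchronous products of stochastic matrices with bounded off-diagonal delays---the deterministic analogue of Theorem~3.1 of \cite{LiuB_SICON_2011}---to conclude that there exists $x^*\in\mathbb{R}$ with $\lim_{k\to\infty}x_i(t_i^k) = x^*$ for every $i$. The main obstacle I anticipate is the bookkeeping in the middle step: the total edge weight $(-l_{ij})\Delta t_i^k$ can be split across many delayed samples of agent $j$, so one cannot simply lower-bound each individual $\alpha_{ij}^{k,k'}$; instead one must aggregate the stochastic update matrices over bounded time windows and recover the uniform positivity and spanning-tree conditions required by the delayed-consensus theorem at that aggregated scale.
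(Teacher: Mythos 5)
Your proposal is correct and follows essentially the same route as the paper: integrating the ODE over inter-update intervals to obtain a self-coefficient $1-l_{ii}\Delta t_i^k\ge\delta_i$ with no self-delay, merging all agents' events into one global sequence to recast the dynamics as a discrete-time stochastic-matrix iteration with uniformly bounded off-diagonal delays, aggregating the weights over bounded windows to recover a uniform $\delta'$-spanning tree from $\mathcal{G}(L)$, and invoking the deterministic delayed-consensus result (Corollary~\ref{corollaryAppendixDeterministic} in the appendix). The paper implements your ``global sequence'' step via the auxiliary variable $y_i(k)=x_i(t_i^{k_i(t_k)})$, but the bookkeeping and the bounds $d_{ik}\le\tau$, $\sum_m B(m)\ge-\delta_{\min}L$ are the same as yours.
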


\begin{proof}
Let $\{t_{k}\}$ be the time sequence that the $k$th update occurred in the network, i.e., $\{t_{k}\}=\cup_{i=1}^{n}\{t_{i}^{k'}\}$, and $\Delta t_{k}=t_{k+1}-t_{k}$. In case for that $t_{k}=t_{i}^{k'}=t_{j}^{k''}$ for some some $k$, $k'$, $k''$ and $i\ne j$, we consider the update of agent $i$ and $j$ at time $t_{k}$ as one update of the network. Now, construct an auxiliary sequence $\{y(k)\}$, where $y(k)=[y_{1}(k),y_{2}(k),\cdots,y_{n}(k)]^{\top}\in \mathbb{R}^{n}$ with $y_{i}(k)=x_{i}(t_{i}^{k_{i}(t_{k})})$, i.e.,
%and $t_{i}^{k_{i}(t_{k})}$ defined as before.
 $y_{i}(k)$ is the latest state that agent $i$ uses in its control law at time $t_{k}$. Now consider the evolution of $\{y(k)\}$.

\begin{enumerate}
  \item[Case 1.] Agent $i$ does not update at time $t_{k+1}$:
  In this case, by definition,
  $$y_{i}(k+1)=y_{i}(k);$$

  \item[Case 2.] Agent $i$ does update at time $t_{k+1}$:
  In this case, $t_{i}^{k_{i}(t_{k+1})}=t_{k+1}$, assume $t_{i}^{k_{i}(t_{k})}=t_{k-d_{ik}}$ be the last update of agent $i$ before $t_{k+1}$, with $d_{ik}\ge 0$ being the number of updates occur at all other agents $x_{j}$, $j\ne i$, between the two successive updates of agent $i$, i.e., in the interval $(t_{i}^{k_{i}(t_{k})},t_{i}^{k_{i}(t_{k+1})})$. By definition $y_{i}(k)=\cdots=y_{i}(k-d_{ik})$, and
  $\dot{x}_{i}(t)=-\sum_{j=1}^{n}l_{ij}y_{j}(k')$ for $t\in (t_{k'},t_{k'+1})$ for all $k'$. Since $[t_{i}^{k_{i}(t_{k})},t_{i}^{k_{i}(t_{k+1})})=\cup_{m=k-d_{ik}}^{k}[t_{m},t_{m+1})$,

  \begin{eqnarray*}
  y_{i}(k+1)&=&x_{i}(t_{k+1})=x_{i}(t_{i}^{k_{i}(t_{k})})+\int_{t_{i}^{k_{i}(t_{k})}}^{t_{i}^{k_{i}(t_{k+1})}}\dot{x}_{i}(t)dt\\
  &=&x_{i}(t_{k-d_{ik}})+\sum_{m=0}^{d_{ik}}\int_{t_{k-d_{ik}+m}}^{k-d_{ik}+m+1}\dot{x}_{i}(t)dt\\
  &=&x_{i}(t_{k-d_{ik}})-\sum_{m=0}^{d_{ik}}\Delta t_{k-d_{ik}+m}\sum_{j=1}^{n}l_{ij}y_{j}(k-d_{ik}+m)\\
  &=&y_{i}(k-d_{ik})-\sum_{m=0}^{d_{ik}}\Delta t_{k-d_{ik}+m}l_{ii}y_{i}(k-d_{ik}+m)\\
  &&-\sum_{m=0}^{d_{ik}}\Delta t_{k-d_{ik}+m}\sum_{j=1,j\ne i}^{n}l_{ij}y_{j}(k-d_{ik}+m)\\
  &=&y_{i}(k)-\sum_{m=0}^{d_{ik}}\Delta t_{k-d_{ik}+m}l_{ii}y_{i}(k)-\sum_{m=0}^{d_{ik}}\Delta t_{k-d_{ik}+m}\sum_{j=1,j\ne i}^{n}l_{ij}y_{j}(k-d_{ik}+m)\\
  &=&(1-l_{ii}\sum_{m=0}^{d_{ik}}\Delta t_{k-d_{ik}+m})y_{i}(k)-\sum_{m=0}^{d_{ik}}\sum_{j=1,j\ne i}^{n}\Delta t_{k-d_{ik}+m}l_{ij}y_{j}(k-d_{ik}+m)\\
  &=&(1-\Delta t_{i}^{k_{i}(t_{k})}l_{ii})y_{i}(k)-\sum_{m=0}^{d_{ik}}\sum_{j=1,j\ne i}^{n}\Delta t_{k-d_{ik}+m}l_{ij}y_{j}(k-d_{ik}+m)\\
  &=&\sum_{m=0}^{d_{ik}}\sum_{j=1}^{n}a_{ij}^{m}(k)y_{j}(k-m),
  \end{eqnarray*}
  where $a_{ii}^{0}(k)=1-\Delta t_{i}^{k_{i}(t_{k})}l_{ii}$, $a_{ii}^{m}(k)=0$ for $m=1,2,\cdots,d_{ik}$ and $a_{ij}^{m}(k)=-\Delta t_{k-m}l_{ij}$.
  It is easy to verify that
  $a_{ii}^{0}(k)=1-\Delta t_{i}^{k_{i}(t_{k})}l_{ii}\ge 1-(1-\delta_{i})=\delta_{i}$, $a_{ij}^{m}=-\Delta t_{k-m}l_{ij}\ge 0$, and
  $$
  \sum_{m=0}^{d_{ik}}\sum_{j=1}^{n}a_{ij}^{m}(k)=1.
  $$
  Besides, by the assumption $\delta_{i}/l_{ii}\le\Delta t_{i}^{k}\le (1-\delta_{i})/l_{ii}$ for each $i$, $k$, it is clear that  in the interval $[t_{i}^{k},t_{i}^{k+1}]$, only finite updates occur for agents $x_{j}$ with $j\ne i$, and the number of updates is uniformly upper bounded, i.e., there exists $\tau>0$  independent of $i$, $k$, such that $d_{ik}\le \tau$. For example, let $\delta_{\min}=\min_{i}\{\delta_{i}/l_{ii}\}$, and $\delta_{\max}=\max_{i}\{\delta_{i}/l_{ii}\}$, then $\delta_{\min}\le\Delta t_{i}^{k}\le \delta_{\max}$ for all $i$, $k$, and on each time interval of length $m\delta_{\min}$, at most $m+1$ updates occur for each agent. Let $h'$ be the smallest positive integer satisfying $h'\delta_{\min}\ge \delta_{\max}$, then on each time interval of length $\delta_{\max}$, at most $h'+1$ updates occur for each agent. Thus on the interval $(t_{i}^{k},t_{i}^{k+1})$, at most $(n-1)(h'+1)$ updates occur for all agents $j$ with $i\ne j$. Pick $\tau=(n-1)(h'+1)$, then $d_{ik}\le \tau$ for all $i$, $k$.

\end{enumerate}

  If agent $i$ does not update at time $t_{k+1}$, i.e., case 1, we define  $a_{ij}^{m}(k)=0$ for all $m=0,1,\cdots,\tau$ and $i,j=1,2,\cdots,n$ except for $a_{ii}^{0}(k)=1$,

 If agent $i$ updates at time $t_{k+1}$, i.e., case 2, we define $a_{ij}^{m}(k)=0$ for all $i,j=1,2,\cdots,n$ and $m=d_{ik}+1,\cdots,\tau$ when $d_{ik}<\tau$.

 For both cases, we can give a uniform iterative formula for $y(k)$ as:
\begin{eqnarray}
  y(k+1)=\sum_{l=0}^{\tau}\sum_{j=1}^{n}a_{ij}^{l}(k)y_{j}(k-l),
\end{eqnarray}
where
$a_{ij}^{l}(k)\ge 0$ for all $i,j,l,k$, and
$$
\sum_{l=0}^{\tau}\sum_{j=1}^{n}a_{ij}^{l}(k)=1,
$$
for all $i$, $k$.
 Denote $\delta=\min_{i}\{\delta_{i}\}>0$, we have $a_{ii}^{0}(k)\ge \delta$ for all $i$, $k$.

 Construct a new matrix $B(k)=[b_{ij}(k)]$, where
 \begin{align}
 b_{ii}(k)=a_{ii}^{0}(k)
 \end{align}
 and
 \begin{align}
 b_{ij}(k)=\sum_{l=0}^{\tau}a_{ij}^{l}(k),~~ if~ i\ne j
 \end{align}

   Obviously, $B(k)\ge \delta I$, and $b_{ij}(k)=\Delta t_{i}^{k_{i}(t_{k})} l_{ij}\ge \delta_{\min}l_{ij}$ if agent $i$ updates at time $t_{k+1}$. If there exists $h>0$ such that on each interval $[t_{k},t_{k+h}]$ all agents update at least once, then $\sum_{m=k}^{k+h}B(m)\ge -\delta_{\min}L$ for each $k$. Since $\mathcal{G}(L)$ has a spanning tree, there exists $\delta'>0$ such that $\sum_{m=k}^{k+h}B(m)$ has a $\delta'$-spanning tree for each $k$. From Corollary \ref{corollaryAppendixDeterministic} in the appendix, the sequence $\{y(k)\}$ will reach a consensus, i.e., there exists $x^{*}\in \mathbb{R}$ such that
$$
\lim_{k\to\infty}y_{i}(k)=x^{*}.
$$
This is equivalent to
$$
\lim_{k\to\infty}x_{i}(t_{i}^{k})=x^{*}.
$$
At last, we give some hint on how to calculate the quantity $h$ mentioned above. It is easy to see that each agent updates at least once on each time interval of length $\delta_{\max}$, since $\Delta t_{i}^{k}\le \delta_{\max}$ for all $i$, $k$. And in each time interval of length $\delta_{\max}$, there are at most $n(h'+1)$ updates of all the agents, where $h'$ is defined as before. Pick $h=n(h'+1)$, then for each $k$, each agent updates at least once in the time interval $[t_{k},t_{k+h}]$.
\end{proof}
\begin{remark}
It is not difficult to see that this algorithm is applicable to leader-follower networks, in which the leader receives no information from others. For example, if agent $i$ is the leader, then $l_{ii}=0$ and the update time interval $\Delta t_{i}^{0}=+\infty$ by definition, which mean the leader $i$ does not need to update its state. Generally, if agent $i$ wants a long update time interval, all it needs to do is to choose small coupling weights so that $l_{ii}$ is small, and this can be done independent of all other agents.
\end{remark}

From the above analysis, it is not difficult to see that the distributed event-triggered algorithm can also be applied to networks with switching topologies. For example, at each update time $t_{i}^{k}$, agent $i$ may reset its coupling weights $l_{ij}$ such that the algorithm can be written in the form:
\begin{eqnarray}\label{eqnDistributedSwitching}
  \dot{x}_{i}(t)=-\sum_{j=1}^{n}l_{ij}^{k}x_{j}(t_{j}^{k_{j}(t)}),~~t\in [t_{i}^{k},t_{i}^{k+1}),~i=1,2,\cdots,n,
\end{eqnarray}
where $\delta_{i}/l_{ii}^{k}\le\Delta t_{i}^{k}\le (1-\delta_{i})/l_{ii}^{k}$. One of the great advantage of this algorithm is that each agent can adjust its update time interval independently based on its own need. If agent $i$ wants a longer update time interval, then it may decrease the coupling weight, otherwise, increase the coupling weight. For example, each agent $i$ may store a scaling factor $\epsilon_{i}^{k}$ such that
\begin{eqnarray}\label{eqnWeightScaling}
l_{ij}^{k}=\epsilon_{i}^{k}l_{ij}.
\end{eqnarray}
By a similar analysis as in the case of fixed topology, we can prove
\begin{theorem}
Suppose that $\epsilon_{\min}$ and $\epsilon_{\max}$ are two positive number satisfying $0<\epsilon_{\min}<\epsilon_{\max}$ and  for all $i$, $k$, $\epsilon_{\min}\le\epsilon_{i}^{k}\le \epsilon_{\max}$. If the graph $\mathcal{G}(L)$ has a spanning tree,
then the distributed event-triggered consensus algorithm with switching topology \eqref{eqnDistributedSwitching} and weight updating rule \eqref{eqnWeightScaling} can reach consensus as $t\to \infty$.
\end{theorem}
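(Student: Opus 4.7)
The plan is to replicate the two-lemma decomposition used in the proof of Theorem~\ref{thmDistributedFixTopology}, with the new ingredient being a uniformity check that exploits the multiplicative weight rule~\eqref{eqnWeightScaling}. Because $l_{ii}^k=\epsilon_i^k l_{ii}$ and the update condition $\delta_i/l_{ii}^k\le\Delta t_i^k\le (1-\delta_i)/l_{ii}^k$ is imposed on the switched Laplacian, one immediately obtains $k$-uniform bounds
\[
\delta_{\min}:=\min_i\frac{\delta_i}{\epsilon_{\max} l_{ii}}\;\le\;\Delta t_i^k\;\le\;\max_i\frac{1-\delta_i}{\epsilon_{\min} l_{ii}}=:\delta_{\max},
\]
which play exactly the role of the like-named constants in the fixed-topology proof. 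With these bounds the first lemma is immediate: the right-hand side of~\eqref{eqnDistributedSwitching} has uniformly bounded coefficients $\epsilon_{\max}|l_{ij}|$, so once the discrete samples converge to a common $x^*$ we get $\dot x_i(t)\to 0$ and the same telescoping estimate used earlier forces $x_i(t)\to x^*$.

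For the second lemma I would introduce the auxiliary sequence $y_i(k)=x_i(t_i^{k_i(t_k)})$ and integrate $\dot x_i$ across successive network events, exactly as in Theorem~\ref{thmDistributedFixTopology}. The crucial observation is that both the diagonal term $l_{ii}^{k_i(t_k)}$ and the off-diagonal terms $l_{ij}^{k_i(t_k)}$ ($j\ne i$) in the row of agent~$i$ are scaled by the \emph{same} factor $\epsilon_i^{k_i(t_k)}$, so the cancellation producing a stochastic row in the fixed-topology case is preserved: $a_{ii}^0(k)=1-\Delta t_i^{k_i(t_k)}l_{ii}^{k_i(t_k)}\ge\delta_i$, while all $a_{ij}^m(k)\ge 0$ and the row-sum is one. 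The uniform lag bound $d_{ik}\le\tau$ is inherited from $[\delta_{\min},\delta_{\max}]$ by the same counting argument used before. Collapsing the lagged coefficients into $B(k)$ yields $B(k)\ge(\min_i\delta_i)I$, and whenever agent $i$ triggers at $t_{k+1}$ the off-diagonal entries satisfy $b_{ij}(k)\ge -\delta_{\min}\epsilon_{\min}\, l_{ij}$. Since $\epsilon_{\min}>0$, the zero-pattern of $L^k$ coincides with that of $L$, so over a window $[t_k,t_{k+h}]$ of length $h=n(h'+1)$ on which every agent fires at least once one obtains $\sum_{m=k}^{k+h}B(m)\ge -\delta_{\min}\epsilon_{\min}L$, so the sum inherits a uniform $\delta'$-spanning tree from $\mathcal{G}(L)$. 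Corollary~\ref{corollaryAppendixDeterministic} then delivers consensus of $\{y(k)\}$, and the first lemma completes the argument.

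The main obstacle is really the bookkeeping around the scaling factor: the multiplicative rule $l_{ij}^k=\epsilon_i^k l_{ij}$ must simultaneously preserve (i) positivity of the effective diagonal coefficient $a_{ii}^0(k)$ (which requires the $\epsilon_i^{k_i(t_k)}$ implicit in $\Delta t_i^{k_i(t_k)}$ to cancel the one inside $l_{ii}^{k_i(t_k)}$), (ii) uniform positivity of the off-diagonal contribution to $B(k)$ (which is ensured by $\epsilon_{\min}>0$), and (iii) uniform boundedness of the delay parameter $\tau$ (which follows from $\delta_{\min},\delta_{\max}\in(0,\infty)$). Once these three uniform properties are verified, no structurally new convergence analysis is required beyond what has already been carried out in the fixed-topology case.
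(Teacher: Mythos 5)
Your proposal is correct and follows exactly the route the paper intends: the paper itself gives no explicit proof here, stating only that the result follows ``by a similar analysis as in the case of fixed topology,'' and your write-up supplies precisely the uniformity checks that make that reduction work (the $k$-uniform bounds $\delta_{\min}\le\Delta t_i^k\le\delta_{\max}$, the preserved row-stochasticity from the common scaling factor $\epsilon_i^k$ in each row, the uniform delay bound $\tau$, and the uniform lower bound on the nonzero entries of $\sum_m B(m)$ feeding into Corollary~\ref{corollaryAppendixDeterministic}). No gaps; this is essentially the same argument, carried out in more detail than the paper provides.
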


Finally, we investigate event-triggered consensus algorithm in networks with stochastically switching topologies in which each agent $i$ updates its coupling weights independently at the same time it updates its state.

Given $0<a<b$, and let $S_{i}(a,b)=\{s=[s_{1},\cdots,s_{n}]:~s_{i}\in [a,b], s_{j}\le 0, j\ne i, \sum_{j=1}^{n}s_{j}=0\}$. The event-triggered algorithm can be formulated as follows:
\begin{eqnarray}\label{eqnDistributedIIdSwitching}
  \dot{x}_{i}(t)=-\sum_{j=1}^{n}\tilde{l}_{ij}^{k}x_{j}(t_{j}^{k_{j}(t)}),~t\in [t_{i}^{k},t_{i}^{k+1}),
\end{eqnarray}
where $\tilde{L}_{i}^{k}=[\tilde{l}_{i1}^{k},\cdots,\tilde{l}_{in}^{k}]\in S_{i}(a_{i},b_{i})$ for some given $0<a_{i}<b_{i}$, and $\{\tilde{L}_{i}^{k}\}$ is an independent and identically distributed sequence.
Before stating the convergence result, we summarize the following assumption.
\begin{assumption}\label{assumDistributedIIdSwitching}
\begin{enumerate}
  \item[(i)] There exist $\delta_{i}\in (0,1/2)$ such that $\Delta t_{i}^{k}\in [\delta_{i}/\tilde{l}_{ii}^{k},(1-\delta_{i})/\tilde{l}_{ii}^{k}]$;

  \item[(ii)] There exists $\delta>0$ such that the graph with Laplacian being $\E \tilde{L}^{k}$ has a $\delta$-spanning tree, where $\tilde{L}^{k}=[\tilde{L}_{1}^{k\top},\cdots,\tilde{L}_{n}^{k\top}]^{\top}$ is a matrix with its $i$th row being $\tilde{L}_{i}^{k}$;
\end{enumerate}
\end{assumption}
%Now we have the convergence result for algorithm \eqref{eqnDistributedIIdSwitching}.

\begin{theorem}\label{thmDistributedIIdSwitching}
Under Assumption \ref{assumDistributedIIdSwitching}, the event-triggered consensus algorithm \eqref{eqnDistributedIIdSwitching} will reach a consensus almost surely.
\end{theorem}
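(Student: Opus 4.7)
The plan is to mirror the proof of Theorem \ref{thmDistributedFixTopology}, keeping the two-step structure (first reduce consensus of $x(t)$ to consensus of a discrete auxiliary sequence $\{y(k)\}$, then prove consensus of $\{y(k)\}$), and then invoke the stochastic consensus result of \cite{LiuB_SICON_2011} in place of Corollary \ref{corollaryAppendixDeterministic}. First I would set $\{t_k\} = \cup_i \{t_i^{k'}\}$ and $y_i(k) = x_i(t_i^{k_i(t_k)})$ exactly as before. The transfer step (the analogue of the first lemma) is unaffected by the stochasticity, since it only uses the uniform upper bound $\Delta t_i^k \le (1-\delta_i)/\tilde l_{ii}^k \le (1-\delta_i)/a_i$ that follows from Assumption \ref{assumDistributedIIdSwitching}(i) and $\tilde L_i^k \in S_i(a_i,b_i)$.

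For the discrete sequence, the same computation yields the recursion
\begin{eqnarray*}
y_i(k+1) = \sum_{m=0}^{d_{ik}} \sum_{j=1}^n a_{ij}^m(k) y_j(k-m)
\end{eqnarray*}
when agent $i$ updates at $t_{k+1}$, where $a_{ii}^0(k) = 1 - \Delta t_i^{k_i(t_k)} \tilde l_{ii}^{k_i(t_k)}$ and $a_{ij}^m(k) = -\Delta t_{k-m}\, \tilde l_{ij}^{k_i(t_k)}$ for $j \ne i$. Because $\tilde L_i^k \in S_i(a_i,b_i)$ has zero row sum and $\tilde l_{ii}^k \in [a_i,b_i]$, Assumption (i) yields $a_{ii}^0(k) \ge \delta_i$, nonnegativity of the remaining entries, and row stochasticity. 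Moreover, since $\Delta t_i^k$ is sandwiched between the deterministic bounds $\delta_i/b_i$ and $(1-\delta_i)/a_i$, the same counting argument as in Theorem \ref{thmDistributedFixTopology} gives a uniform upper bound $\tau$ on $d_{ik}$ and a uniform $h$ such that every agent updates at least once in $[t_k, t_{k+h}]$, almost surely.

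Collapse the delays into $B(k) = [b_{ij}(k)]$ with $b_{ii}(k)=a_{ii}^0(k)$ and $b_{ij}(k)=\sum_m a_{ij}^m(k)$ for $i\ne j$. Then $B(k)$ is row stochastic, $B(k) \ge \delta I$ with $\delta = \min_i \delta_i$, and the sequence $\{B(k), \mathcal{F}_k\}$ is adapted if we let $\mathcal{F}_k$ be the $\sigma$-algebra generated by all draws $\tilde L_i^{k'}$ used up to time $t_k$. The key stochastic step is to show that $\E\{\sum_{m=k}^{k+h} B(m) \mid \mathcal{F}_k\}$ has a uniform $\delta''$-spanning tree. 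In the horizon $[t_k,t_{k+h}]$ every agent updates at least once almost surely; when agent $i$ updates at $t_{m+1}$, its off-diagonal contribution to row $i$ of $B(m)$ is $\sum_{l=0}^{d_{im}} \Delta t_{m-l}\, \tilde l_{ij}^{k_i(t_m)} \ge (\delta_i/b_i)\, \tilde l_{ij}^{k_i(t_m)}$ for $j \ne i$. By the i.i.d. assumption on $\{\tilde L_i^k\}$ and conditioning on which agent updates at which of the $t_{m+1}$, the expectation of this row $i$ contribution, given $\mathcal{F}_k$, is bounded below by a positive multiple of $\E \tilde L_i^k$ (the off-diagonal part). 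Summing over $m$ and using Assumption (ii) together with the fact that each agent updates at least once, one obtains a uniform $\delta''>0$ with $\E\{\sum_{m=k}^{k+h} B(m) \mid \mathcal{F}_k\} \ge \delta'' I + (\text{off-diagonal rows bounded below by a spanning-tree weight matrix})$, giving the desired $\delta''$-spanning tree.

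With this in hand, Theorem 3.1 of \cite{LiuB_SICON_2011} applies to the adapted stochastic sequence $\{B(k), \mathcal{F}_k\}$ and gives $\lim_{k\to\infty} y_i(k) = x^*$ almost surely for some random $x^*$; the first-lemma transfer then yields $\lim_{t\to\infty}x_i(t) = x^*$ almost surely. The main obstacle is the conditional-expectation bookkeeping in the spanning-tree step: which agent updates at $t_{m+1}$ and what weight row was drawn are both random and not independent, so the lower bound on $\E\{B(m)\mid \mathcal{F}_k\}$ requires carefully decomposing over the (random) update schedule and exploiting the independence of the \emph{new} row $\tilde L_i^{k_i(t_m)}$ from $\mathcal{F}_k$ once $i$ and $m$ are fixed.
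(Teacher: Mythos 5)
Your overall architecture matches the paper's (transfer lemma, merged update times, the delayed recursion for $y(k)$, the collapsed matrix $B(k)$, and a spanning-tree condition on an expectation), but two things go wrong at the decisive step. First, the tool you invoke at the end is the wrong one: Theorem 3.1 of \cite{LiuB_SICON_2011} is the \emph{delay-free} adapted-process result, whereas the recursion $y_i(k+1)=\sum_{m=0}^{d_{ik}}\sum_j a_{ij}^m(k)y_j(k-m)$ genuinely has off-diagonal delays. Collapsing the delays into $B(k)$ only packages the spanning-tree hypothesis; it does not turn the dynamics into $y(k+1)=B(k)y(k)$. The correct instrument is the delayed version, Theorem \ref{thmAppendixMain}, or more precisely Corollary \ref{corollaryAppendixIndependent}, which is what the paper uses.

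Second, and more substantively, you set yourself the task of establishing a uniform $\delta''$-spanning tree for the \emph{conditional} expectation $\E\{\sum_{m=k}^{k+h}B(m)\mid\mathcal{F}_k\}$ and then candidly flag the bookkeeping (random update schedule entangled with random row draws) as the unresolved main obstacle. That obstacle is real on your route, but the paper sidesteps it entirely: since $h'$ is built from the \emph{deterministic} bounds $\delta_i/b_i$ and $(1-\delta_i)/a_i$, the matrices $A^0(k),\dots,A^N(k)$ become independent of $A^0(k'),\dots,A^N(k')$ once $k'\ge k+2N$ with $N=n(h'+1)$. Corollary \ref{corollaryAppendixIndependent} is tailored to exactly this "independent after a fixed lag" structure and only requires the \emph{unconditional} bound $\sum_{m=k+1}^{k+N}\E B(m)\ge\delta'\tilde L_0$ with $\delta'=\min_i\{\delta_i/b_i\}$, which follows directly from the i.i.d.\ assumption and the fact that every agent updates at least once in any window of $N$ merged steps; the conditional-expectation version over the longer horizon $[k+1,k+N+h]$ then comes for free by discarding the first $N$ (dependent) terms. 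Identifying and exploiting this lag-$N$ independence is the key idea your proposal is missing; without it (or a completed direct conditional-expectation argument), the proof is not closed.
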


\begin{proof}
The algorithm \eqref{eqnDistributedIIdSwitching} can be reformulated as:

\begin{eqnarray}\label{eqnDistributedDependentSwitching}
  \dot{x}_{i}(t)=-\sum_{j=1}^{n}l_{ij}^{k}x_{j}(t_{j}^{k_{j}(t)}),~t\in [t_{k},t_{k+1}),
\end{eqnarray}
where $l_{ij}^{k}=\tilde{l}_{ij}^{k_{i}(t_{k})}$.
Let $L^{k}=[l_{ij}^{k}]$, then generally the sequence $\{L^{k}\}$ is not an independent sequence since for each $k$, $l_{ij}^{k}=l_{ij}^{k+1}$ if no update of agent $i$ occurs at time $t_{k+1}$. However, similar to the analysis given above, it can be seen that $L^{k}$ is independent of $L^{k'}$ for $k'\ge k+N$, where $N=n(h'+1)$ and
$h'$ is the smallest positive integer satisfying $h'\min_{i}\{\delta_{i}/b_{i}\}\ge \max_{i}\{(1-\delta_{i})/a_{i}\}$.

Similarly, we have
\begin{eqnarray}
  y(k+1)=\sum_{l=0}^{N}\sum_{j=1}^{n}a_{ij}^{l}(k)y_{j}(k-l),
\end{eqnarray}
where
$a_{ij}^{l}(k)\ge 0$ for all $i,j,l,k$, and
$$
\sum_{l=0}^{N}\sum_{j=1}^{n}a_{ij}^{l}(k)=1,
$$
for all $i$, $k$.
Since $a_{ij}^{l}(k)=-\Delta t_{k-l}l_{ij}^{k-l}$, we can see that $A^{0}(k),\cdots,A^{N}(k)$ are independent of $A^{0}(k'),\cdots,A^{N}(k')$, when
$k'\ge k+2N$. On the other hand, in each time interval $(t_{k},t_{k+N})$, each agent there at least updates once. Thus we have
$$
\sum_{m=k+1}^{k+N}\E B(k)\ge \delta'\tilde{L}_{0},
$$
where $B(k)$ is defined as before and $\delta'=\min_{i,k}\{\Delta t_{i}^{k}\}=\min_{i}\{\delta_{i}/b_{i}\}>0$, and $[\tilde{L}_{0}]_{ij}=-[\E\tilde{L}^{k}]_{ij}$ for $i\ne j$ and $[\tilde{L}_{0}]_{ii}=0$ for all $i$. This implies that $\sum_{m=k+1}^{k+N}\E B(k)$ has a $\delta''$-spanning tree with $\delta''=\delta\delta'>0$. The conclusion follows from Corollary \ref{corollaryAppendixIndependent} in the Appendix.

\end{proof}

\section{Numerical Simulation}\label{secSimulation}

In this section, we provide an example to illustrate the theoretical results in the previous section. We use the distributed event-triggered algorithm in networks with stochastically switching topologies as considered in Theorem \ref{thmDistributedIIdSwitching}.

Consider a network of four agents with
\begin{align*}
&\tilde{L}_{1}^{k}\in \{[1, -1 ,0 ,0], [1,0, 0,-1]\}, \\
&\tilde{L}_{2}^{k}=\{[-1,1,0,0], [0,1,-1,0]\},\\
&\tilde{L}_{3}^{k}\in \{[0,-1,1,0], [0,0,1,-1]\},\\
&\tilde{L}_{4}^{k}\in \{[0,-1,0,1], [0,0,-1,1]\},
\end{align*}
and each agent selects its coupling weights using a uniform distribution. We choose $\delta_{i}=0.1$ for all the agents. The next update time is randomly chosen from the permissible range. It is not difficult to verify that the conditions in Theorem \ref{thmDistributedIIdSwitching} are satisfied. The simulation results are provided in Fig. \ref{figIIdSwitching} with the initial value of the four agents being randomly chosen. It can be seen that the agents actually reached consensus.

\begin{figure}
  \centering
  \includegraphics[width=0.8\textwidth]{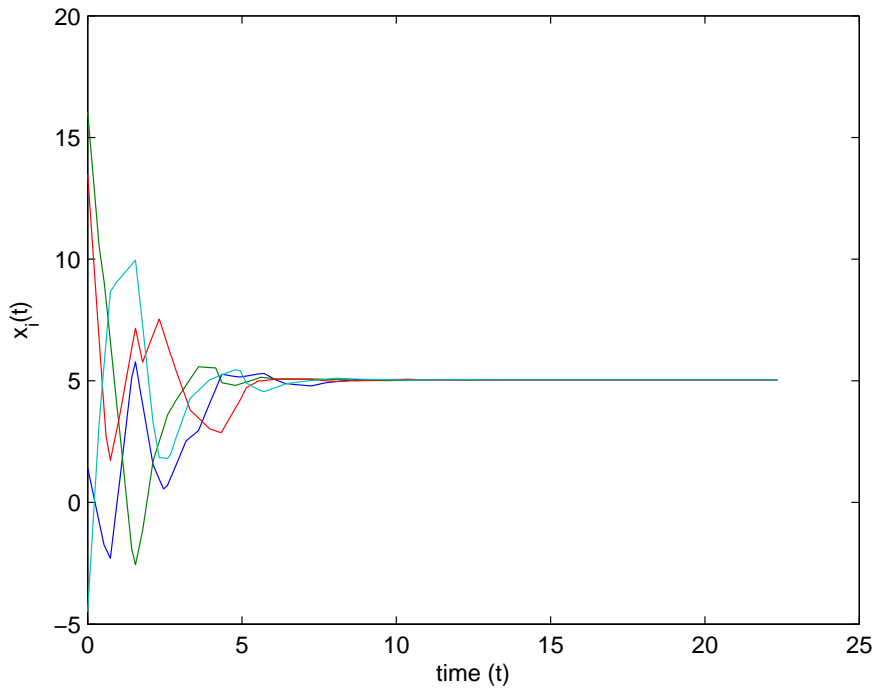}
  \caption{Self-triggered consensus in networks of four agents with stochastically switching topologies.}\label{figIIdSwitching}
\end{figure}

\section{Conclusion}\label{secConclusion}

In this paper, we proposed a new structure-based self-triggered consensus algorithm in both centralized approach and distributed approach. Different from existing works that used a quadratic Lyapunov function as the analysis tool, we reduce the self-triggered consensus algorithm to a discrete-time consensus algorithm by some proper transformation, which enables the application of the product theory of stochastic matrices to the convergence analysis. Compared to existing work, our method provides several advantages. First, each agent does not need to calculate the system error to determine its update time. Second, it provides explicit positive lower and upper bounds for the update interval of each agent based on its coupling weights, which can be adjusted by each agent independent of other agents. We also used our method to investigate networks with switching topologies, especially networks with stochastically switching topologies. Our work reveals that the event/self-triggered algorithms are essentially discrete and thus more suitable to a discrete analysis framework. And it is predictable that more results can be obtained for self-triggered algorithms under our new analysis framework in the future.

\section*{Appendix: Discrete-time consensus analysis}

The appendix is devoted to the discussion of discrete-time consensus algorithms in networks of multi-agents with time delays based on the product theory of stochastic matrices and provide some results that is used in the main text.

\subsection{Preliminaries}

This subsection will provide some preliminaries on stochastic matrices that will be used in the proof of Theorem \ref{thmAppendixMain} in the next subsection.

A nonnegative matrix $A$ is called a {\em stochastic indecomposable and aperiodic (SIA)} if it is a stochastic matrix and there exists a column vector $v$ such that $\lim_{k\to\infty}A^{k}={\bf e}_{n}v^{\top}$, where ${\bf e}_{n}$ is an $n$-dimensional column vector with all entries $1$. A sequence of stochastic matrices $\{A_{k}\}$ is {\em ergodic} if there exists a column vector $v$ such that $\lim_{k\to\infty}\prod_{k=1}^{\infty}A_{k}={\bf e}_{n}v^{\top}$. $A$ is called {\em $\delta$-SIA} if its $\delta$-matrix is SIA. A stochastic matrix $A$ is called {\em scrambling} if for any pair ($i$, $j$), there exists an index $k$ such that both $a_{ik}>0$ and $a_{jk}>0$. Similarly, $A$ is called {\em $\delta$-scrambling} if its $\delta$-matrix is scrambling. For a matrix $A$, $\diag(A)$ refers to the diagonal matrix formed by the diagonal elements of $A$, i.e., $[\diag(A)]_{ii}=[A]_{ii}$ for all $i$ and $[\diag(A)]_{ij}=0$ for $i\ne j$. And we define $\overline{\diag}(A)=A-\diag(A)$.

We use $\Prb\{\cdot|\mathcal{F}\}$ to denote the conditional probability with respect to $\mathcal{F}$ on a probability space $\{\Omega,\Prb,\mathcal{F}\}$, which can also be expressed into the conditional expectation of an indicator function, i.e.,
$\Prb\{\cdot|\mathcal{F}\}=\E\{{\bf 1}_{\{\cdot\}}|\mathcal{F}\}$, where ${\bf 1}_{\{\cdot\}}$ is an indicator function which is defined as
\begin{eqnarray*}
  {\bf 1}_{S}(x)=\left\{
  \begin{array}{cc}
  1, & x\in S,\\
  0, & x\not\in S.
  \end{array}
  \right.
\end{eqnarray*}
Thus, $\Prb\{\cdot|\mathcal{F}\}$ is also a random variable that is measurable with respect to $\mathcal{F}$.

In the following, we will introduce some lemmas that will be used in the proof of the main results.

Let $A=[a_{ij}]$ be an $n\times n$ stochastic matrix. Define
\begin{eqnarray*}
\Delta(A)=\max_{j}\max_{i_{1},i_{2}}|a_{i_{1}j}-a_{i_{2}j}|.
\end{eqnarray*}
It can be seen that $\Delta(A)$ measures how different the rows of $A$ are. $\Delta(A)=0$ if and only if the rows of $A$ are identical.
Define
\begin{eqnarray*}
\lambda(A)=1-\min_{i_{1},i_{2}}\sum_{j}\min\{a_{i_{1}j},a_{i_{2}j}\}.
\end{eqnarray*}
\begin{remark}
It can be seen that if $A$ is scrambling, then $\lambda<1$, if $A$ is $\delta$-scrambling for some $\delta>0$, then $\lambda(A)\le 1-\delta$.
\end{remark}

\begin{lemma}\cite{Wolfowitz}\label{lemContractionOfProduct}
For any stochastic matrices $A_{1}$, $A_{2}$, $\cdots$, $A_{k}$, $k>0$,
\begin{eqnarray*}
\Delta(A_{1}A_{2}\cdots A_{k})\le \prod_{i=1}^{k}\lambda(A_{i}).
\end{eqnarray*}
\end{lemma}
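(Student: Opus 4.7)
The plan is to reduce the desired inequality to two ingredients. The first is a one-matrix comparison $\Delta(A) \le \lambda(A)$, which serves as the base case. The second is a contraction inequality $\Delta(AB) \le \lambda(A)\,\Delta(B)$ showing that left multiplication by a stochastic matrix shrinks the row-spread by the factor $\lambda(A)$. Iterating the contraction and closing with the base case then yields the product bound.

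For the base case, fix any two rows $i_1, i_2$ of $A$. Since each row sums to $1$, one has $\sum_j (a_{i_1 j} - a_{i_2 j})^+ = \sum_j (a_{i_2 j} - a_{i_1 j})^+ = 1 - \sum_j \min\{a_{i_1 j}, a_{i_2 j}\}$. In particular, every entry-wise difference satisfies $|a_{i_1 j} - a_{i_2 j}| \le 1 - \sum_j \min\{a_{i_1 j}, a_{i_2 j}\}$, and taking the maximum over $(i_1, i_2, j)$ gives $\Delta(A) \le \lambda(A)$.

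For the contraction step, fix rows $i_1, i_2$ and a column $j$, and write $a_{i_1 k} = \alpha_k + p_k$ and $a_{i_2 k} = \alpha_k + q_k$ with $\alpha_k = \min\{a_{i_1 k}, a_{i_2 k}\}$, so that $p_k q_k = 0$ and $S := \sum_k p_k = \sum_k q_k = 1 - \sum_k \alpha_k \le \lambda(A)$. Then
\[
(AB)_{i_1 j} - (AB)_{i_2 j} = \sum_k (p_k - q_k)\, b_{kj}.
\]
Each of $\sum_k p_k b_{kj}$ and $\sum_k q_k b_{kj}$ equals $S$ times a convex combination of $\{b_{kj}\}_k$, so their difference is at most $S\bigl(\max_k b_{kj} - \min_k b_{kj}\bigr) \le S\,\Delta(B) \le \lambda(A)\,\Delta(B)$ in absolute value. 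Maximizing over $(i_1, i_2, j)$ yields $\Delta(AB) \le \lambda(A)\,\Delta(B)$.

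A direct induction on $k$, peeling off the leftmost factor, then gives
\[
\Delta(A_1 A_2 \cdots A_k) \le \lambda(A_1)\,\Delta(A_2 \cdots A_k) \le \cdots \le \Bigl(\prod_{i=1}^{k-1}\lambda(A_i)\Bigr)\Delta(A_k) \le \prod_{i=1}^{k}\lambda(A_i),
\]
where the last step invokes the base case $\Delta(A_k) \le \lambda(A_k)$. I expect the main obstacle to be the contraction step, specifically the observation that the equal-sum condition $\sum_k p_k = \sum_k q_k$ (and not merely the disjointness $p_k q_k = 0$) is what allows the difference $\sum_k p_k b_{kj} - \sum_k q_k b_{kj}$ to be controlled by $\Delta(B)$ rather than by the cruder quantity $\max_k b_{kj}$; bounding each term in isolation would lose this factor and spoil the telescoping product.
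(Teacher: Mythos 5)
Your proof is correct and complete: the base case $\Delta(A)\le\lambda(A)$, the contraction inequality $\Delta(AB)\le\lambda(A)\,\Delta(B)$ via the decomposition $a_{i_1k}=\alpha_k+p_k$, $a_{i_2k}=\alpha_k+q_k$ with equal excess masses, and the induction peeling off the leftmost factor are all sound (with the trivial $S=0$ case handled implicitly). The paper itself gives no proof of this lemma, citing Wolfowitz instead; your argument is the standard Dobrushin--Hajnal contraction-coefficient proof underlying that reference, and you correctly identify the equal-sum condition $\sum_k p_k=\sum_k q_k$ as the essential point that lets the difference be controlled by the spread $\Delta(B)$ rather than by $\max_k b_{kj}$.
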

\begin{remark}\label{remErgodicityOfProducts}
From Lemma \ref{lemContractionOfProduct}, it can be seen that for a sequence of stochastic matrices $\{A_{k}\}$, if there exists a sequence $\{k_{i}\}$ such that $k_{i}<k_{i+1}$, $\lim_{i\to\infty}k_{i}=\infty$, and $\prod_{i=1}^{\infty}\lambda(\prod_{m=k_{i}+1}^{k_{i+1}}A_{m})=0$, then $\lim_{k\to\infty}\Delta(\prod_{m=1}^{k} A_{m})=0$, i.e., $\{A_{k}\}$ is ergodic. Particularly, from the property of scrablingness, if there exists $\delta>0$ such that there are infinite matrices in the sequence $\{\prod_{m=k_{i}+1}^{k_{i+1}}A_{m}\}$ that are $\delta$-scrambling, then $\{A_{k}\}$ is ergodic.
\end{remark}

\begin{lemma}\cite{Wolfowitz,XiaoWang_TAC_2008}\label{lemSIAtoScrambling}
Let $A_{1}$, $A_{2}$, $\cdots$, $A_{k}$ (repetitions permitted) be $r\times r$ SIA matrices with the property that for any $1<k_{1}<k_{2}\le k$, $\prod_{i=k_{1}}^{k_{2}}A_{i}$ is SIA. If $k>{\bf n}(r)$, then $\prod_{i=1}^{k}A_{i}$ is a scrambling matrix. Here ${\bf n}(r)$ is the number of different types of all $r\times r$ stochastic matrices.
\end{lemma}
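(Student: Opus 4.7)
The plan is a pigeonhole argument on the finite set of support patterns (``types'') of $r\times r$ stochastic matrices, coupled with the structural rigidity supplied by the SIA hypothesis on consecutive subproducts. Define the \emph{type} $\tau(M)$ of a stochastic matrix $M$ to be its zero/nonzero support pattern, so two stochastic matrices have the same type exactly when their positive supports coincide. A basic observation is that $\tau(AB)$ depends only on $\tau(A)$ and $\tau(B)$: indeed $(AB)_{ij}>0$ if and only if some index $l$ has both $A_{il}>0$ and $B_{lj}>0$. By definition, ${\bf n}(r)$ bounds the number of distinct types.

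First I would form the left partial products $P_{j}=\prod_{i=1}^{j}A_{i}=A_{j}A_{j-1}\cdots A_{1}$ for $j=1,\ldots,k$. Since $k>{\bf n}(r)$, the pigeonhole principle applied to the types $\tau(P_{1}),\ldots,\tau(P_{k})$ yields indices $1\le j_{1}<j_{2}\le k$ with $\tau(P_{j_{1}})=\tau(P_{j_{2}})$. Writing $P_{j_{2}}=QP_{j_{1}}$ with $Q=A_{j_{2}}\cdots A_{j_{1}+1}$, the hypothesis on consecutive subproducts guarantees that $Q$ is SIA.

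The crux is then to promote the single equality $\tau(QP_{j_{1}})=\tau(P_{j_{1}})$ into scramblingness of $P_{j_{1}}$. Using the type-dependence of products, an easy induction gives $\tau(Q^{n}P_{j_{1}})=\tau(P_{j_{1}})$ for every $n\ge 1$. Since $Q$ is SIA, $Q^{n}\to{\bf e}_{r}v^{\top}$ for some probability vector $v$; pick any $l_{0}$ with $v_{l_{0}}>0$, so that for sufficiently large $n$ every entry in column $l_{0}$ of $Q^{n}$ is strictly positive. A direct calculation then shows that $Q^{n}P_{j_{1}}$ has a fully positive column: choosing $j^{\ast}$ in the support of row $l_{0}$ of $P_{j_{1}}$, one gets $[Q^{n}P_{j_{1}}]_{ij^{\ast}}\ge (Q^{n})_{il_{0}}(P_{j_{1}})_{l_{0}j^{\ast}}>0$ for every $i$. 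In particular $Q^{n}P_{j_{1}}$ is scrambling, and the type equality $\tau(Q^{n}P_{j_{1}})=\tau(P_{j_{1}})$ forces $P_{j_{1}}$ to be scrambling as well.

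Scramblingness then transfers up the chain by the analogous fact that stochastic times scrambling is scrambling: for any rows $i_{1},i_{2}$ of the left factor choose positive entries in those rows and match them to a common positive column supplied by the scrambling right factor. Applying this to $P_{k}=(A_{k}\cdots A_{j_{1}+1})P_{j_{1}}$ completes the proof. The main subtlety I foresee is the step coupling the asymptotic rank-one structure of $Q^{n}$ to the stochastic structure of $P_{j_{1}}$: one must ensure that the ``universal'' positive column of $Q^{n}$ survives right-multiplication by $P_{j_{1}}$ to become a positive column of the product, and then use the rigid type equality to pull that positivity back to $P_{j_{1}}$ itself. Everything else reduces to pigeonhole and a couple of routine support calculations.
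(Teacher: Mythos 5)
Your argument is correct. The paper does not prove this lemma---it is imported verbatim from the cited references---but your proof is essentially the classical Wolfowitz argument: pigeonhole on the types of the partial products $P_{1},\dots,P_{k}$, the identity $\tau(Q^{n}P_{j_{1}})=\tau(P_{j_{1}})$ obtained by iterating type-multiplicativity, the SIA convergence $Q^{n}\to{\bf e}_{r}v^{\top}$ producing a strictly positive column of $Q^{n}P_{j_{1}}$, the type equality pulling that positive column back to $P_{j_{1}}$, and the observation that left-multiplying a scrambling matrix by stochastic matrices preserves scramblingness. The only point worth flagging is that $Q^{n}$ need not itself be a consecutive subproduct of the $A_{i}$, but your argument never needs it to be SIA---only $Q$ is, and that already gives the rank-one limit---so the proof stands as written.
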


\begin{lemma}\cite{XiaoWang_TAC_2008}\label{lemHighDimensionSpanningTree}
Let $A_{1}$, $\cdots$, $A_{m}$ be $n\times n$ nonnegative matrices, let
\begin{eqnarray*}
D=\left[
\begin{array}{cccc}
A_{1}& A_{2}& \cdots & A_{m}\\
0& 0& \cdots & 0\\
\cdots & \cdots & \ddots & \cdots\\
0 & 0& \cdots &0
\end{array}
\right]_{mn\times mn},
\end{eqnarray*}
let
\begin{eqnarray*}
M_{0}=\left[
\begin{array}{ccccc}
I & 0& \cdots& 0&0\\
I& 0&\cdots& 0&0\\
0&I&\cdots& 0&0\\
\vdots& \vdots&\ddots& 0&0\\
0&0&\cdots& I&0
\end{array}
\right]_{mn\times mn},
\end{eqnarray*}
and let $M_{k}=D+M_{0}^{k}$ for any $k\in \{1,2,\cdots,m-1\}$. Then if $\mathcal{G}(\sum_{i=1}^{m}A_{i})$ contains a spanning tree, then $\mathcal{G}(M_{k})$ contains a spanning tree with the property that the root vertex of the spanning tree has a self-loop in $\mathcal{G}(M_{k})$.
\end{lemma}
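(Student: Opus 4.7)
The plan is to read off the graph $\mathcal{G}(M_k)$ directly from the block structure of $M_k$ and exhibit a spanning tree by hand. First I would unpack the powers of $M_0$: a short induction on $k$ shows that the $(i,j)$-block of $M_0^k$ equals $I$ precisely when $j=\max\{1,\,i-k\}$ and is zero otherwise. Adding $D$ (which is supported only on the first block row) then yields the following block description of $M_k$: block row $1$ equals $[A_1+I,\,A_2,\,\ldots,\,A_m]$; for $2\leq i\leq k+1$ the only nonzero block in row $i$ is $I$ in column $1$; and for $k+1<i\leq m$ the only nonzero block in row $i$ is $I$ in column $i-k$.

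Next, label the $mn$ vertices of $\mathcal{G}(M_k)$ by pairs $(i,v)$ with $i\in\{1,\ldots,m\}$ and $v\in\{1,\ldots,n\}$. The description above immediately reads off the in-neighbors: for $i\geq 2$, the vertex $(i,v)$ has a single in-neighbor, namely $(1,v)$ when $i\leq k+1$ and $(i-k,v)$ when $i>k+1$. Hence, restricting to the $m$ vertices with fixed second coordinate $v$ produces a directed tree with root $(1,v)$: the parent of each $(i,v)$ has first coordinate strictly less than $i$, so iterating parents reaches $(1,v)$ in finitely many steps. Consequently, $(1,v)$ reaches every $(i,v)$ for $i\geq 2$ inside this ``column.''

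To glue the columns together I would invoke the spanning-tree hypothesis on $\mathcal{G}\bigl(\sum_{j=1}^{m}A_j\bigr)$. Let $v^*$ denote its root; for any directed edge $u\to v$ in this spanning tree, positivity of $\sum_{j}[A_j]_{v,u}$ yields some $j^*\in\{1,\ldots,m\}$ with $[A_{j^*}]_{v,u}>0$, which, by the description of the first block row of $M_k$, is exactly an edge $(j^*,u)\to(1,v)$ in $\mathcal{G}(M_k)$. Combined with the intra-column path $(1,u)\to\cdots\to(j^*,u)$ from the previous step, this shows that any directed path from $(1,v^*)$ to $(1,u)$ extends to one reaching $(1,v)$. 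Inducting along the spanning tree of $\mathcal{G}(\sum_{j}A_j)$ therefore produces a directed path from $(1,v^*)$ to every $(1,v)$; composing with the intra-column trees gives reachability from $(1,v^*)$ to every vertex of $\mathcal{G}(M_k)$, from which a spanning tree of $\mathcal{G}(M_k)$ rooted at $(1,v^*)$ is extracted in the standard way. The root carries a self-loop because the $(v^*,v^*)$-entry of the top-left block satisfies $[A_1+I]_{v^*,v^*}=[A_1]_{v^*,v^*}+1\geq 1>0$.

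The main obstacle I anticipate is the first step: translating the block-level formula for $M_0^k$ into the clean parent-pointer picture used in the second paragraph. Once that picture is in hand, the rest of the proof is a routine assembly of intra-column chains with one cross-column edge per spanning-tree edge of $\mathcal{G}(\sum_{j}A_j)$, and no quantitative estimates are needed.
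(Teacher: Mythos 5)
Your proof is correct. Note that the paper does not actually prove this lemma --- it is quoted from the reference of Xiao and Wang with only a remark appended --- so there is no in-paper proof to compare against; your argument is the natural self-contained one. The key computation, $[M_0^k]_{i,j}=I$ iff $j=\max\{1,i-k\}$ (blockwise), is easily verified by induction, and it correctly yields the parent-pointer picture: every vertex $(i,v)$ with $i\ge 2$ has the unique in-neighbour $(\max\{1,i-k\},v)$, whose first coordinate is strictly smaller, so each ``column'' is a directed tree rooted at $(1,v)$. The gluing step is also sound under the paper's convention that $[M]_{pq}>0$ encodes the edge $(q,p)$: an entry $[A_{j^*}]_{v,u}>0$ sits in block $(1,j^*)$ of $M_k$ and hence gives the edge $(j^*,u)\to(1,v)$, which concatenates with the intra-column chain $(1,u)\to\cdots\to(j^*,u)$. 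The self-loop at the root comes from the $+I$ in the $(1,1)$ block exactly as you say. As a small bonus, your argument is uniform in $k$ (for $k\ge m-1$ all columns point directly to block column $1$), which is precisely the extension the paper needs in the remark following the lemma.
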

\begin{remark}
Actually, since $M_{0}^{k}=M_{0}^{m-1}=\left[\begin{array}{cccc}I & 0 & \cdots & 0 \\
I & 0 & \cdots & 0 \\ \vdots & \vdots & \ddots & \vdots \\
I & 0 & \cdots & 0\end{array}\right]$ for all $k\ge m-1$, thus Lemma \ref{lemHighDimensionSpanningTree} holds for all $k\in \mathbb{N}$. This is important for our further results based on this lemma.
\end{remark}

%\begin{remark}
%Actually, Lemma \ref{lemHighDimensionSpanningTree} can be considered as a special case of the following more general fact which becomes obvious when described in the language of graph theory. Let $G$ be a tree whose root vertex has a self-loop. If we enlarge $G$ in such way that each vertex is replaced with a tree, and except the tree corresponding to the root vertex in the original tree, all other trees link to one vertex of their parent trees, i.e., the tree corresponding to its parent vertex in the original tree, through  its root. Then the enlarged graph is still a tree whose root vertex has a self-loop.
%\end{remark}

\begin{remark}
If the condition is $\mathcal{G}(\sum_{i=1}^{m}A_{i})$ has a $\delta$-spanning tree for some $\delta>0$, then the conclusion becomes that $\mathcal{G}(M_{k})$ has a $\delta'$-spanning tree whose root vertex has a self-loop, where $\delta'\ge\delta/m$.
\end{remark}

\begin{lemma}\cite{WangL_JCtrl_2006}\label{lemSpanningTreeToSIA}
Let $A$ be a stochastic matrix. If $\mathcal{G}(A)$ contains a spanning tree with the property that the root vertex of the spanning tree has a self-loop in $\mathcal{G}(A)$, then A is SIA.
\end{lemma}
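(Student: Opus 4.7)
The plan is to show that under the hypothesis, some finite power $A^N$ has a strictly positive column, which forces $A^N$ to be scrambling; standard contraction arguments then give $A^k\to \mathbf{e}_n v^\top$, which is exactly the definition of SIA.

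First I would unpack the path-theoretic meaning of matrix entries in the paper's convention. Since edges are encoded by $(v_j,v_i)\in\mathcal{E}(\mathcal{G}(A))\iff a_{ij}>0$, a standard induction shows that $[A^k]_{ij}>0$ if and only if there is a directed path of length exactly $k$ in $\mathcal{G}(A)$ from $v_j$ to $v_i$. Let $r$ denote the root vertex of the spanning tree. By assumption $a_{rr}>0$, and for every vertex $i$ there is a directed path from $r$ to $v_i$ of some length $\ell_i\le n-1$. Padding this path in front with $n-1-\ell_i$ copies of the self-loop at $r$ gives a path from $r$ to $i$ of length exactly $n-1$, so $[A^{n-1}]_{ir}\ge a_{rr}^{\,n-1-\ell_i}\cdot(\text{weight of the original path})>0$ for every $i$. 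Hence column $r$ of $B:=A^{n-1}$ is strictly positive.

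Second, I would observe that a stochastic matrix with a strictly positive column is automatically scrambling: for any two rows $i_1,i_2$, the index $r$ satisfies $\min(b_{i_1 r},b_{i_2 r})>0$. Letting $\beta=\min_{i}b_{ir}>0$ we get $\lambda(B)\le 1-\beta<1$ directly from the definition of $\lambda$. Applying Lemma \ref{lemContractionOfProduct} to the constant sequence $B,B,\ldots$ yields $\Delta(B^k)\le \lambda(B)^k\to 0$, so the rows of $B^k$ become identical in the limit. Since each $B^k$ is stochastic with bounded entries, this forces existence of a column vector $v$ with $\lim_{k\to\infty}B^k=\mathbf{e}_n v^\top$.

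Third, I would lift the conclusion from the subsequence $\{A^{k(n-1)}\}$ to the full sequence $\{A^m\}$. Writing $m=k(n-1)+s$ with $0\le s<n-1$, we have $A^m=A^s B^k$, and since $A^s$ is stochastic, $A^s\mathbf{e}_n=\mathbf{e}_n$, so $A^m\to A^s\mathbf{e}_n v^\top=\mathbf{e}_n v^\top$ as $k\to\infty$ uniformly in $s$. Thus $\lim_{m\to\infty}A^m=\mathbf{e}_n v^\top$, and $A$ is SIA by definition.

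The main obstacle, which is really a bookkeeping issue rather than a conceptual one, is keeping the direction convention straight: one must verify that the spanning tree rooted at $r$ corresponds to a positive \emph{column} (not row) of $A^{n-1}$, and that the self-loop hypothesis is precisely what is needed to pad paths of varying lengths to the common length $n-1$. Without the self-loop at the root, one could at best conclude that the pattern of positive entries in $A^k$ stabilizes on a periodic cycle, and $A$ could fail to be aperiodic; the self-loop kills the potential periodicity of the recurrent class, which is exactly the ``A'' in SIA.
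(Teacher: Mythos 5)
Your proof is correct. Note first that the paper does not actually prove this lemma --- it is imported verbatim from Xiao and Wang \cite{WangL_JCtrl_2006} --- so there is no in-paper argument to compare against; what you have written is a valid self-contained proof. Your bookkeeping of the edge convention is right: $(v_j,v_i)\in\mathcal{E}$ iff $a_{ij}>0$, so a length-$k$ path from $v_j$ to $v_i$ corresponds to $[A^k]_{ij}>0$, a root-to-all spanning tree gives positivity of the \emph{column} indexed by the root, and the self-loop $a_{rr}>0$ is exactly what lets you pad paths of lengths $\ell_i\le n-1$ up to the common length $n-1$, yielding a strictly positive column in $B=A^{n-1}$, hence $\lambda(B)<1$. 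The one place you are slightly too quick is the deduction that $\Delta(B^k)\to 0$ plus boundedness ``forces'' $B^k\to\mathbf{e}_n v^\top$: boundedness alone only gives subsequential limits. The standard patch is to note that $[B^{k+1}]_{ij}=\sum_l b_{il}[B^k]_{lj}$ is a convex combination of the entries of column $j$ of $B^k$, so $\max_i[B^k]_{ij}$ is nonincreasing and $\min_i[B^k]_{ij}$ is nondecreasing in $k$; both therefore converge, and since their difference is bounded by $\Delta(B^k)\to 0$ they converge to a common value $v_j$. With that one line added, the reduction from $B^k$ to the full sequence $A^m=A^sB^k$ (finitely many $s$, each $A^s$ stochastic so $A^s\mathbf{e}_n=\mathbf{e}_n$) is exactly right and the lemma follows.
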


From Lemma \ref{lemHighDimensionSpanningTree} and \ref{lemSpanningTreeToSIA}, we can have the following Lemma.
\begin{lemma}\label{lemProdHighDimensionSpanningTree}
Let $A_{1}^{i}$, $\cdots$, $A_{m}^{i}$, $i=1,\cdots,k$ be $n\times n$ nonnegative matrices. Let
\begin{eqnarray*}
D_{i}=\left[
\begin{array}{cccc}
A_{1}^{i}& A_{2}^{i}& \cdots & A_{m}^{i}\\
0& 0& \cdots & 0\\
\cdots & \cdots & \ddots & \cdots\\
0 & 0& \cdots &0
\end{array}
\right]_{mn\times mn},
\end{eqnarray*}
and $M_{0}$ being defined as in Lemma \ref{lemHighDimensionSpanningTree}. Then if $\mathcal{G}(\sum_{i=1}^{k}\sum_{j=1}^{m}A_{j}^{i})$ has a $\delta$-spanning tree for some $\delta>0$, then the product
$\prod_{i=1}^{k}(D_{i}+M_{0})$ has a $\delta'$-spanning tree for some $0<\delta'<\delta$. And the root of the spanning tree has a self-loop, thus $\prod_{i=1}^{k}(D_{i}+M_{0})$ is $\delta'$-SIA.
\end{lemma}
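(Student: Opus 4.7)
The plan is to expand $\prod_{i=1}^{k}(D_i+M_0)$, keep a carefully chosen subcollection of nonnegative terms, and reduce to the setting of Lemma \ref{lemHighDimensionSpanningTree} applied to a lower-bounding matrix of the form $\tilde{D}+M_0^{k}$.

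First, since each $D_i$ and $M_0$ has nonnegative entries, expanding the product gives $2^{k}$ nonnegative terms indexed by subsets $S\subseteq\{1,\ldots,k\}$ (take $D_i$ at positions $i\in S$ and $M_0$ elsewhere). Discarding all but the empty-set term and the $k$ singleton terms yields the entrywise estimate
\[
\prod_{i=1}^{k}(D_i+M_0)\;\geq\;M_0^{k}+\sum_{i=1}^{k}M_0^{k-i}D_iM_0^{i-1}.
\]

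Next, I would analyze the first block row of each $M_0^{k-i}D_iM_0^{i-1}$. Because $D_i$ has only its first block row nonzero, and the first block row of any power of $M_0$ is $(I,0,\ldots,0)$, it suffices to compute the first block row of $D_iM_0^{i-1}$: a direct calculation shows that each matrix $A_j^{i}$ lands at position $(1,1)$ when $j\leq i$ and at position $(1,j-i+1)$ when $j>i$. Summing over $i$, the first block row of $\sum_{i=1}^{k}M_0^{k-i}D_iM_0^{i-1}$ can be written as $(\tilde{A}_1,\ldots,\tilde{A}_m)$, where every $A_j^{i}$ contributes to exactly one $\tilde{A}_b$, so that $\sum_{b=1}^{m}\tilde{A}_b=\sum_{i=1}^{k}\sum_{j=1}^{m}A_j^{i}$. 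Discarding the nonnegative block rows below the first then gives the cleaner bound $\prod_{i=1}^{k}(D_i+M_0)\geq\tilde{D}+M_0^{k}$, where $\tilde{D}$ has first block row $(\tilde{A}_1,\ldots,\tilde{A}_m)$ and zero elsewhere.

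With this in hand, I would invoke Lemma \ref{lemHighDimensionSpanningTree} for $\tilde{D}+M_0^{k}$: the hypothesis that $\mathcal{G}(\sum_{i,j}A_j^{i})$ has a $\delta$-spanning tree matches exactly the hypothesis required (via $\sum_b\tilde{A}_b=\sum_{i,j}A_j^{i}$), so the quantitative remark after that lemma produces a $\delta'$-spanning tree of $\mathcal{G}(\tilde{D}+M_0^{k})$ with $\delta'\geq\delta/m$ whose root carries a self-loop supplied by the identity block in the $(1,1)$ position of $M_0^{k}$. Because the full product dominates $\tilde{D}+M_0^{k}$ entrywise, its graph inherits this $\delta'$-spanning tree together with the root self-loop, and Lemma \ref{lemSpanningTreeToSIA} then upgrades this to the $\delta'$-SIA conclusion.

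The main obstacle is the combinatorial bookkeeping in the second paragraph: verifying that, as $(i,j)$ ranges over all pairs, every $A_j^{i}$ occupies a definite slot among $\tilde{A}_1,\ldots,\tilde{A}_m$, so that their block-column sum recovers precisely $\sum_{i,j}A_j^{i}$. This requires splitting into the regimes $j\leq i$ and $j>i$ (which feed different block columns of $\tilde{D}$) and handling the boundary case $i\geq m$, where all of $A_1^{i},\ldots,A_m^{i}$ collapse into $\tilde{A}_1$, as well as the mild distinction between $k<m$ and $k\geq m$ when propagating the first block row through $M_0^{k-i}$. Once that identity is in place, the invocation of the earlier lemmas is mechanical.
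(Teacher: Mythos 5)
Your proposal is correct and follows essentially the same route as the paper: both lower-bound the product by $M_{0}^{k}+\sum_{i=1}^{k}M_{0}^{k-i}D_{i}M_{0}^{i-1}$, observe that the first block row sum of this matrix recovers $\sum_{i=1}^{k}\sum_{j=1}^{m}A_{j}^{i}$ (your explicit slot computation $(1,\max(j-i+1,1))$ is just a more detailed version of the paper's "block row sum is preserved" claim), and then invoke Lemma \ref{lemHighDimensionSpanningTree} followed by Lemma \ref{lemSpanningTreeToSIA}.
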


\begin{remark}\label{remProdHighDimensionSpanningTree}
It is obvious from Lemma \ref{lemHighDimensionSpanningTree} that if there exists $\epsilon>0$ such that $D'_{i}\ge \epsilon (M_{0}+D_{i})$, then $\prod_{i=1}^{k}D_{i}'$ is $\delta'$-SIA for some $0<\delta'<\delta$. This is the case that will appear in the proof of Theorem \ref{thmAppendixMain}.
\end{remark}

\begin{proof}First, we have
\begin{eqnarray}\label{eqnProductSIA}
\prod_{i=1}^{k}(M_{0}+D_{i})\ge M_{0}^{k}+\sum_{i=1}^{k}M_{0}^{k-i}D_{i}M_{0}^{i-1}\ge M_{0}^{k}+\sum_{i=1}^{k}D_{i}M_{0}^{i-1},
\end{eqnarray}
where the second inequality is due to the fact that $M_{0}^{j}D_{i}\ge D_{i}$ for any $j\ge 0$. And it is not difficult to verify that the first block row sum of $D_{i}$ is preserved in $D_{i}M_{0}^{i-1}$ for $i=1,\cdots,k$. Thus the first block row sum of $\sum_{i=1}^{k}D_{i}M_{0}^{i-1}$ equals that of $\sum_{i=1}^{k}D_{i}$, i.e., $\sum_{i=1}^{k}\sum_{j=1}^{m}A_{j}^{i}$. This means $\mathcal{G}(\sum_{i=1}^{k}D_{i}M_{0}^{i-1})$ has a $\delta$-spanning tree. From Lemma \ref{lemHighDimensionSpanningTree}, $\mathcal{G}(M_{0}^{k}+\sum_{i=1}^{k}D_{i}M_{0}^{i-1})$ has a $\delta'$-spanning tree for some $0<\delta'<\delta$, and the root has a self-loop. This is also true for  the product $\prod_{i=1}^{k}(M_{0}+D_{i})$ due to \eqref{eqnProductSIA}. Thus, $\prod_{i=1}^{k}(M_{0}+D_{i})$ is $\delta'$-SIA from Lemma \ref{lemSpanningTreeToSIA}. The proof is completed.
\end{proof}
From Lemma \ref{lemProdHighDimensionSpanningTree}, we can easily obtain the following corollary.
\begin{corollary}\label{corollaryProdHighDimensionSpanningTree}
Let $A_{1}^{i}$, $\cdots$, $A_{m}^{i}$, $i=1,2,\cdots,$ be uniformly bounded $n\times n$ nonnegative matrices. Let $D_{i}$ and $M_{0}$ be defined as in Lemma \ref{lemProdHighDimensionSpanningTree}. If there exists $k$ such that $\mathcal{G}(\sum_{i=1}^{k}\sum_{j=1}^{m}A_{j}^{i})$ has a $\delta$-spanning tree for some $\delta>0$, then for each $k'>k$, the product
$\prod_{i=1}^{k'}(D_{i}+M_{0})$ is $\delta'(k')$-SIA, where $0<\delta'(k')<\delta$ depends on $k'$.
\end{corollary}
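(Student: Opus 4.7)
The approach I would take is to derive this as a direct corollary of Lemma \ref{lemProdHighDimensionSpanningTree} by exploiting the monotonicity of the $\delta$-spanning tree property under addition of nonnegative matrices. The idea is that the hypothesis is really a ``one--time'' condition guaranteeing enough connectivity in the accumulated structure after $k$ steps, and since adding more nonnegative matrices can only strengthen entries, the same condition is automatically inherited for every $k'>k$. So I would reduce everything to applying Lemma \ref{lemProdHighDimensionSpanningTree} with $k$ replaced by $k'$.

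First, I would fix an arbitrary $k'>k$ and check that $\mathcal{G}\bigl(\sum_{i=1}^{k'}\sum_{j=1}^{m}A_{j}^{i}\bigr)$ still has a $\delta$-spanning tree. Since every $A_{j}^{i}$ is nonnegative,
\begin{equation*}
\sum_{i=1}^{k'}\sum_{j=1}^{m}A_{j}^{i} \;=\; \sum_{i=1}^{k}\sum_{j=1}^{m}A_{j}^{i} \;+\; \sum_{i=k+1}^{k'}\sum_{j=1}^{m}A_{j}^{i} \;\ge\; \sum_{i=1}^{k}\sum_{j=1}^{m}A_{j}^{i},
\end{equation*}
so every entry that is $\ge \delta$ in the smaller sum is also $\ge \delta$ in the larger one. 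Hence the $\delta$-graph of the enlarged sum contains the $\delta$-graph of the hypothesized sum as a subgraph, and in particular inherits a spanning tree. With this established, I would invoke Lemma \ref{lemProdHighDimensionSpanningTree} directly on the $k'$ factors $D_{1},\dots,D_{k'}$: its hypothesis is now met with the same $\delta$, its conclusion yields a $\delta'(k')$-spanning tree for $\prod_{i=1}^{k'}(D_{i}+M_{0})$ whose root carries a self-loop, and Lemma \ref{lemSpanningTreeToSIA} then gives $\delta'(k')$-SIA.

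The only point requiring a little thought is the $k'$-dependence of $\delta'(k')$. Tracing the proof of Lemma \ref{lemProdHighDimensionSpanningTree}, the constant $\delta'$ arises from applying (the $\delta$-refinement of) Lemma \ref{lemHighDimensionSpanningTree} to $M_{0}^{k'}+\sum_{i=1}^{k'}D_{i}M_{0}^{i-1}$, and the pigeonhole step that selects a single block entry from a sum of block matrices degrades the lower bound as more summands are aggregated; the uniform boundedness of $\{A_{j}^{i}\}$ keeps the resulting $\delta'(k')$ strictly positive but potentially shrinking with $k'$, which is exactly what the statement permits. I do not anticipate any serious obstacle here: the heavy lifting has already been done in Lemma \ref{lemProdHighDimensionSpanningTree}, and the corollary is essentially a bookkeeping extension, with the monotonicity of the $\delta$-spanning tree property under nonnegative summation being the only substantive observation.
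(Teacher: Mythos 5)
Your proposal is correct and matches the paper's own proof, which likewise observes that nonnegativity of the $A_{j}^{i}$ makes the $\delta$-spanning-tree property monotone in $k'$ and then applies Lemma \ref{lemProdHighDimensionSpanningTree} with $k'$ in place of $k$. Your additional remarks on the $k'$-dependence of $\delta'(k')$ are a sensible elaboration but not needed beyond what the paper states.
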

\begin{proof}
Since $A_{j}^{i}\ge 0$, if $\mathcal{G}(\sum_{i=1}^{k}\sum_{j=1}^{m}A_{j}^{i})$ has a $\delta$-spanning tree for some $\delta>0$, then for each $k'>k$, $\mathcal{G}(\sum_{i=1}^{k'}\sum_{j=1}^{m}A_{j}^{i})$ also has a $\delta$-spanning tree. The conclusion is obvious from Lemma \ref{lemProdHighDimensionSpanningTree}.
\end{proof}
\begin{remark}\label{remProductSIA}
If $A_{1}^{i}$, $\cdots$, $A_{m}^{i}$, $i=1,2,\cdots,$ is a random sequence, then it is obvious from Corollary \ref{corollaryProdHighDimensionSpanningTree} that the event $\mathcal{G}(\sum_{i=1}^{k}\sum_{j=1}^{m}A_{j}^{i})$ has a $\delta$-spanning tree is contained in the event $\prod_{i=1}^{k'}(D_{i}+M_{0})$ is $\delta'(k')$-SIA.
\end{remark}

\begin{lemma}[second Borel--Cantelli lemma \cite{Durrett}]\label{lemSecondBorelCantelli}
Let $\{\mathcal{F}_{n}, n\ge 0\}$ be a filtration with
$\mathcal{F}_{0}=\{\emptyset,\Omega\}$ and $\{X_{n}, n\ge 1\}$ a
sequence of events with $X_{n}\in\mathcal{F}_{n}$. Then
\begin{eqnarray*}
\{X_{n}~\textnormal{ \it occurs~infinitely~often
}\}=\Big\{\sum_{n=1}^{+\infty}\Prb\{X_{n}|\mathcal{F}_{n-1}\}=+\infty\Big\},
\end{eqnarray*}
with a probability $1$, where ``infinitely often" means that an
infinite number of events from $\{X_{n}\}_{n=1}^{\infty}$ occur.
\end{lemma}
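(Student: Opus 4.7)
The plan is to derive the statement from the martingale convergence theorem applied to the Doob decomposition of the counting process $S_n = \sum_{k=1}^{n}\mathbf{1}_{X_k}$. Set $Y_k = \mathbf{1}_{X_k} - \Prb\{X_k\mid\mathcal{F}_{k-1}\}$ and $M_n = \sum_{k=1}^{n}Y_k$. Since $X_k\in\mathcal{F}_k$ while $\Prb\{X_k\mid\mathcal{F}_{k-1}\}$ is $\mathcal{F}_{k-1}$-measurable, $\{Y_k,\mathcal{F}_k\}$ is a martingale difference sequence with $|Y_k|\le 1$; thus $M_n$ is a zero-mean martingale with uniformly bounded increments, and we have the decomposition $S_n = M_n + T_n$ where $T_n = \sum_{k=1}^{n}\Prb\{X_k\mid\mathcal{F}_{k-1}\}$ is the (nondecreasing, predictable) compensator.

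Next I would invoke the classical dichotomy for martingales with bounded increments: almost surely, either $M_n$ converges to a finite limit, or both $\limsup_n M_n = +\infty$ and $\liminf_n M_n = -\infty$. To obtain this I localize via the stopping times $\tau_N = \inf\{n:|M_n|>N\}$; the stopped process $M_{n\wedge\tau_N}$ is bounded by $N+1$ (using unit increments), hence $L^1$-bounded, hence almost-surely convergent by Doob's martingale convergence theorem. Taking a union over $N$ yields convergence of $M_n$ on $\{\sup_n|M_n|<\infty\}$; a symmetric upcrossings argument on the complementary event, combined with Fatou's lemma to transfer from $M_{n\wedge\tau_N}$ back to $M_n$, rules out one-sided divergence and delivers the oscillation half.

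With the decomposition in hand, the set equality $\{X_n \text{ i.o.}\}=\{T_\infty=\infty\}$ splits into two cases. On $\{T_\infty<\infty\}$, the identity $M_n = S_n - T_n \ge -T_\infty$ shows $M_n$ is bounded below, so the oscillation branch is impossible; hence $M_n$ converges and $S_\infty = \lim(M_n+T_n)$ is finite, i.e., only finitely many $X_n$ occur. On $\{T_\infty=\infty\}$, suppose for contradiction that $S_\infty<\infty$ on a set of positive probability; there $M_n = S_n - T_n \to -\infty$, which contradicts the dichotomy since $\limsup M_n = -\infty$ is excluded. Hence $S_\infty=\infty$ almost surely on $\{T_\infty=\infty\}$, completing the equality.

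The main obstacle is the bounded-increment martingale dichotomy used in the second paragraph. The convergence half is a routine application of Doob's theorem after localization, but the oscillation half requires the extra upcrossings estimate to rule out one-sided escape to $\pm\infty$ for a martingale with unit jumps; once that is in place, all remaining steps are bookkeeping through the Doob decomposition $S_n = M_n + T_n$.
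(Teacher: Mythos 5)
The paper states this lemma as a cited result from \cite{Durrett} and offers no proof of its own, and your argument --- the Doob decomposition $S_n=M_n+T_n$ of the counting process followed by the bounded-increment martingale dichotomy --- is precisely the standard proof in that reference and is correct. The one soft spot is your sketch of the dichotomy: localizing with $\tau_N=\inf\{n:|M_n|>N\}$ only yields convergence on $\{\sup_n|M_n|<\infty\}$, whereas the cleaner route is to stop at the one-sided times $\inf\{n:M_n\le -N\}$, so that the stopped martingale is bounded below by $-N-1$ and hence converges, giving convergence directly on $\{\liminf_n M_n>-\infty\}$ (and symmetrically on $\{\limsup_n M_n<+\infty\}$); but you flag this issue explicitly and the remaining bookkeeping is exactly right.
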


\begin{lemma}\label{lemProbabilityToLongerSeqence}
Let $\{A_{k},\mathcal{F}_{k}\}$ be an adapted random sequence. If for a sequence $\{X_{k}\}$ defined as $X_{k}=\{A_{k}\in S_{k}\}$ for some given set $S_{k}$ there exists $\delta>0$ such that the conditional probability
$\Prb\{X_{m+1}|\mathcal{F}_{m}\}>\delta$, then for each $m, h>0$,
\begin{eqnarray*}
\Prb\{X_{m+1},\cdots,X_{m+h}|\mathcal{F}_{m}\}>\delta^{h}.
\end{eqnarray*}
\end{lemma}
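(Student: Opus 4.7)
The plan is to prove the bound by induction on $h$, using the tower property of conditional expectations (the chain rule for conditional probabilities). The base case $h=1$ is exactly the hypothesis $\Prb\{X_{m+1}\mid\mathcal{F}_m\}>\delta$. The key structural observation that makes induction work is adaptedness: since $\{A_k,\mathcal{F}_k\}$ is adapted and $X_k=\{A_k\in S_k\}$, the indicator $\mathbf{1}_{X_k}$ is $\mathcal{F}_k$-measurable, and because $\{\mathcal{F}_k\}$ is a nondecreasing filtration, $\mathbf{1}_{X_{m+1}},\ldots,\mathbf{1}_{X_{m+h}}$ are all $\mathcal{F}_{m+h}$-measurable.

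For the inductive step, assume $\Prb\{X_{m+1},\ldots,X_{m+h}\mid\mathcal{F}_m\}>\delta^{h}$. Write the joint conditional probability as an expectation of a product of indicators and pull out the $\mathcal{F}_{m+h}$-measurable factors:
\begin{eqnarray*}
\Prb\{X_{m+1},\ldots,X_{m+h+1}\mid\mathcal{F}_m\}
&=&\E\bigl\{\mathbf{1}_{X_{m+1}}\cdots\mathbf{1}_{X_{m+h+1}}\,\big|\,\mathcal{F}_m\bigr\}\\
&=&\E\bigl\{\E\bigl[\mathbf{1}_{X_{m+1}}\cdots\mathbf{1}_{X_{m+h+1}}\,\big|\,\mathcal{F}_{m+h}\bigr]\,\big|\,\mathcal{F}_m\bigr\}\\
&=&\E\bigl\{\mathbf{1}_{X_{m+1}}\cdots\mathbf{1}_{X_{m+h}}\,\Prb\{X_{m+h+1}\mid\mathcal{F}_{m+h}\}\,\big|\,\mathcal{F}_m\bigr\}.
\end{eqnarray*}
Applying the hypothesis $\Prb\{X_{m+h+1}\mid\mathcal{F}_{m+h}\}>\delta$ pointwise and then the inductive hypothesis gives
\[
\Prb\{X_{m+1},\ldots,X_{m+h+1}\mid\mathcal{F}_m\}
>\delta\,\E\bigl\{\mathbf{1}_{X_{m+1}}\cdots\mathbf{1}_{X_{m+h}}\,\big|\,\mathcal{F}_m\bigr\}
=\delta\,\Prb\{X_{m+1},\ldots,X_{m+h}\mid\mathcal{F}_m\}
>\delta^{h+1},
\]
completing the induction.

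The one point that deserves slight care is the strict inequality in going from ``$>\delta$'' pointwise to ``$>\delta$'' after integration. This is not a genuine obstacle: since $\Prb\{X_{m+1}\mid\mathcal{F}_m\}>\delta>0$, the event $X_{m+1}$ has strictly positive conditional probability, so the integrand $\mathbf{1}_{X_{m+1}}\cdots\mathbf{1}_{X_{m+h}}\bigl(\Prb\{X_{m+h+1}\mid\mathcal{F}_{m+h}\}-\delta\bigr)$ is nonnegative and strictly positive on a set of positive conditional $\mathcal{F}_m$-measure, which yields the strict inequality above. No further technical step is needed, and no property of the $S_k$ beyond measurability is used; the lemma is really a purely measure-theoretic fact about adapted $\{0,1\}$-valued sequences whose one-step conditional probabilities are uniformly bounded below.
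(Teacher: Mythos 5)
Your proof is correct and follows essentially the same route as the paper's: both peel off the last indicator via the tower property, bound $\Prb\{X_{m+h+1}\mid\mathcal{F}_{m+h}\}$ by $\delta$ pointwise, and iterate (the paper writes the iteration as a single telescoping chain rather than a formal induction). Your added remark justifying the strict inequality is a minor refinement the paper glosses over (its own chain only yields $\ge\delta^{h}$), but it does not change the substance of the argument.
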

\begin{proof}
By the definition of conditional probability, we have
\begin{eqnarray*}
&&\Prb\{X_{m+1},\cdots,X_{m+h}|\mathcal{F}_{m}\}\\
&=&\E\{{\bf 1}_{X_{m+1}}\cdots{\bf 1}_{X_{m+h}}|\mathcal{F}_{m}\}\\
&=&\E\{{\bf 1}_{X_{m+1}}\cdots\E\{{\bf 1}_{X_{m+h}}|\mathcal{F}_{m+h-1}\}|\mathcal{F}_{m}\}\\
&\ge&\delta\E\{{\bf 1}_{X_{m+1}}\cdots{\bf 1}_{X_{m+h-1}}|\mathcal{F}_{m}\}\\
&\ge&\delta^{2}\E\{{\bf 1}_{X_{m+1}}\cdots{\bf 1}_{X_{m+h-2}}|\mathcal{F}_{m}\}\\
&\ge&\cdots\\
&\ge&\delta^{h}.
\end{eqnarray*}
\end{proof}

\begin{lemma}(Lemma 5.4 in \cite{LiuB_SICON_2011})\label{lemExpectationToProbability1}
Let $\{\Omega, \mathcal{F},\Prb\}$ be a probability space, and $f$ be a
random variable with $0\le f\le 1$. If for a $\sigma$-algebra
$\mathcal{F}'\subseteq \mathcal{F}$, a set $S\in \mathcal{F}'$ with
$\Prb\{S\}>0$, $\E\{f|\mathcal{F}'\}\ge\delta$ holds on $S$ for some
$\delta>0$, then we have
\begin{eqnarray*}
\Prb\{f\ge\frac{\delta}{2}|\mathcal{F}'\}\ge\frac{\delta}{2},
\end{eqnarray*}
on $S$ and particularly,
\begin{eqnarray*}
\E\{f{\bf
1}_{\{f\ge\frac{\delta}{2}\}}|\mathcal{F}'\}\ge\frac{\delta^{2}}{4}
\end{eqnarray*}
on $S$.
\end{lemma}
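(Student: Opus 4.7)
The plan is to reverse-engineer the bound on $\Prb\{f \ge \delta/2 \mid \mathcal{F}'\}$ from the hypothesis on the conditional expectation by splitting $f$ at the threshold $\delta/2$ and exploiting that $f \le 1$. This is essentially a conditional version of the Paley--Zygmund / reverse Markov argument.

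First I would decompose $f = f\,\mathbf{1}_{\{f < \delta/2\}} + f\,\mathbf{1}_{\{f \ge \delta/2\}}$ and take conditional expectation with respect to $\mathcal{F}'$ on the set $S$. The small-value part is controlled by
\begin{eqnarray*}
\E\{f\,\mathbf{1}_{\{f<\delta/2\}} \mid \mathcal{F}'\} \le \frac{\delta}{2}\,\Prb\{f<\delta/2 \mid \mathcal{F}'\} \le \frac{\delta}{2},
\end{eqnarray*}
while the large-value part is controlled using $f \le 1$:
\begin{eqnarray*}
\E\{f\,\mathbf{1}_{\{f\ge\delta/2\}} \mid \mathcal{F}'\} \le \Prb\{f\ge\delta/2 \mid \mathcal{F}'\}.
\end{eqnarray*}
Adding these and using the hypothesis $\E\{f \mid \mathcal{F}'\} \ge \delta$ on $S$ yields $\delta \le \delta/2 + \Prb\{f \ge \delta/2 \mid \mathcal{F}'\}$, which rearranges to the first claim $\Prb\{f\ge\delta/2 \mid \mathcal{F}'\} \ge \delta/2$ on $S$.

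For the second inequality, I would simply bound $f\,\mathbf{1}_{\{f\ge\delta/2\}} \ge (\delta/2)\,\mathbf{1}_{\{f\ge\delta/2\}}$ pointwise and take conditional expectation:
\begin{eqnarray*}
\E\{f\,\mathbf{1}_{\{f\ge\delta/2\}} \mid \mathcal{F}'\} \ge \frac{\delta}{2}\,\Prb\{f\ge\delta/2 \mid \mathcal{F}'\} \ge \frac{\delta^{2}}{4}
\end{eqnarray*}
on $S$, where the last step uses the first part.

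There is no real obstacle here; the only point requiring a little care is to remember that conditional expectations and conditional probabilities are random variables measurable with respect to $\mathcal{F}'$, and that the inequalities above should be read as holding almost surely on $S$ rather than pointwise. Since $S \in \mathcal{F}'$ and $\mathbf{1}_{\{f\ge\delta/2\}}$ is bounded and $\mathcal{F}$-measurable, the elementary monotonicity and linearity properties of conditional expectation justify each step without any additional measurability argument.
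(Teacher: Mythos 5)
Your proof is correct. Note that the paper does not actually prove this lemma --- it is quoted verbatim from Lemma 5.4 of \cite{LiuB_SICON_2011} and used as a black box --- so there is no in-paper argument to compare against; your split of $f$ at the threshold $\delta/2$ combined with the bound $f\le 1$ is the standard reverse-Markov argument and is exactly what a proof of this statement amounts to. Both inequalities follow as you describe, with all steps holding almost surely on $S$ by monotonicity of conditional expectation.
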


\subsection{Discrete-time delayed consensus analysis}

In this section, we will discuss discrete-time consensus algorithms in networks with switching topologies and time delays. We first consider general stochastic processes described by an adapted sequence and establish a most general result. Then we use it to obtain two corollaries under two special cases that will appear in the main text.

Consider the following discrete time dynamical systems with switching topologies:
\begin{eqnarray}\label{eqnDiscrete}
  x_{i}(k+1)=\sum_{l=0}^{\tau}\sum_{j=1}^{n}a_{ij}^{l}(k)x_{j}(k-l),~~i=1,\cdots,n,
\end{eqnarray}
where $k$ is the time index, $x(k)=[x_{1}(k),\cdots,x_{n}(k)]^{\top}\in \mathbb{R}^{n}$ is the state variable of the system at time $k$, $\tau\ge 0$ is the bound of the time delay, where $\tau=0$ corresponds to the case of no time delay.
In the following, we always assume that $a_{ii}^{0}(k)>0$, $a_{ii}^{l}(k)=0$ for $l\ne 0$, $a_{ij}^{l}(k)\ge 0$ for each $i$, $j$, $l$, $k$, and
\begin{eqnarray*}
    \sum_{l=0}^{\tau}\sum_{j=1}^{n}a_{ij}^{l}(k)=1
\end{eqnarray*}
holds for each $k$ and $i$.

Define $A^{l}(k)=[a_{ij}^{l}(k)]$, and $B(k)=[b_{ij}(k)]=\sum_{l=0}^{\tau}A^{l}(k)$.

We have the following Theorem.

\begin{theorem}\label{thmAppendixMain}
  Let $\{A^{0}(k),\cdots,A^{\tau}(k),\mathcal{F}_{k}\}$ be an adapted process. If there exists $\delta>0$, $h>0$ such that $B(k)\ge \delta I$ and the conditional expectation $\E\{\sum_{k=m+1}^{m+h}B(k)|\mathcal{F}_{m}\}$ has a $\delta$-spanning tree, then the system (\ref{eqnDiscrete}) will reach consensus almost surely.
\end{theorem}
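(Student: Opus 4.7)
The plan is to lift the delayed iteration (\ref{eqnDiscrete}) to a non-delayed recursion on an augmented state space and then apply the product-of-stochastic-matrices machinery of the Appendix. Put $X(k)=[x(k)^{\top},x(k-1)^{\top},\ldots,x(k-\tau)^{\top}]^{\top}\in\mathbb{R}^{n(\tau+1)}$ and
\begin{align*}
\tilde{A}(k)=\begin{bmatrix} A^{0}(k) & A^{1}(k) & \cdots & A^{\tau}(k) \\ I & 0 & \cdots & 0 \\ 0 & I & \cdots & 0 \\ \vdots & & \ddots & \vdots \\ 0 & \cdots & I & 0 \end{bmatrix},
\end{align*}
so that (\ref{eqnDiscrete}) becomes $X(k+1)=\tilde{A}(k)X(k)$. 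Each $\tilde{A}(k)$ is stochastic: its first block row sums to one by the standing hypothesis and the remaining rows sum to one through the subdiagonal identities. Because $a_{ii}^{l}(k)=0$ for $l\ne 0$, the assumption $B(k)\ge\delta I$ forces $A^{0}(k)\ge\delta I$, and a block-by-block comparison then gives the entrywise domination $\tilde{A}(k)\ge\varepsilon(M_{0}+D(k))$ with $\varepsilon=\delta/(1+\delta)$, where $D(k)$ and $M_{0}$ are the $(\tau+1)$-block matrices of Lemma \ref{lemProdHighDimensionSpanningTree}.

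Partition time into consecutive windows of length $h$ and set $\Phi_{j}=\prod_{k=jh+1}^{(j+1)h}\tilde{A}(k)$. The deterministic core of the argument is that whenever the realization $\sum_{k=jh+1}^{(j+1)h}B(k)$ happens to possess a $(\delta/2)$-spanning tree, Remark \ref{remProdHighDimensionSpanningTree} combined with the domination above makes $\Phi_{j}$ a $\delta'$-SIA matrix for a single $\delta'>0$ independent of $j$. Hence the theorem reduces to showing that the event $S_{j}=\{\sum_{k=jh+1}^{(j+1)h}B(k)\text{ has a }(\delta/2)\text{-spanning tree}\}$ occurs for infinitely many $j$ almost surely.

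For the probabilistic step the goal is a uniform lower bound $\Prb\{S_{j}\mid\mathcal{F}_{jh}\}\ge\eta$ for some deterministic $\eta=\eta(\delta,h,n)>0$. Because $\E\{\sum_{k=jh+1}^{(j+1)h}B(k)\mid\mathcal{F}_{jh}\}$ is $\mathcal{F}_{jh}$-measurable, one can select an $\mathcal{F}_{jh}$-measurable spanning tree $T_{j}$ whose (at most $n-1$) edges $(i,i')$ satisfy $\E\{[\sum B(k)]_{ii'}\mid\mathcal{F}_{jh}\}\ge\delta$. Applied to the $[0,1]$-valued normalization $[\sum B(k)]_{ii'}/h$, Lemma \ref{lemExpectationToProbability1} yields the marginal bound $\Prb\{[\sum B(k)]_{ii'}\ge\delta/2\mid\mathcal{F}_{jh}\}\ge\delta/(2h)$ for every tree edge. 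I would then upgrade these marginal bounds to a joint lower bound on $S_{j}$ by exposing one time slice $B(k)$ at a time and applying Lemma \ref{lemExpectationToProbability1} conditionally on the enlarged $\sigma$-algebra $\mathcal{F}_{k-1}$, in the style of Lemma \ref{lemProbabilityToLongerSeqence}; adaptedness guarantees that a spanning-tree lower bound on the conditional expectation of the remaining partial sum survives at each intermediate step. Once $\Prb\{S_{j}\mid\mathcal{F}_{jh}\}\ge\eta$ is in hand, the conditional second Borel--Cantelli Lemma \ref{lemSecondBorelCantelli} forces $\Phi_{j}$ to be $\delta'$-SIA for infinitely many $j$ almost surely. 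Lemma \ref{lemSIAtoScrambling} then converts long enough runs of SIA matrices into a scrambling product, and Lemma \ref{lemContractionOfProduct} together with Remark \ref{remErgodicityOfProducts} drive $\Delta(\prod_{k}\tilde{A}(k))\to 0$, i.e.\ the sequence $\{\tilde{A}(k)\}$ is ergodic; reading off the first block of $X(k)$ gives almost-sure consensus of $\{x(k)\}$.

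The main obstacle is the joint conditional probability estimate on $S_{j}$: a naive union bound is useless, since each tree edge is only covered with marginal probability of order $\delta/h$ and the $n-1$ edge events are strongly dependent through the random sequence $\{A^{l}(k)\}_{k=jh+1}^{(j+1)h}$. The resolution is the iterated-conditioning argument sketched above, which mirrors the corresponding step in the proof of Theorem 3.1 of \cite{LiuB_SICON_2011}; everything else is essentially a bookkeeping exercise combining the linear-algebraic lemmas of the preceding subsection with the standard second Borel--Cantelli lemma.
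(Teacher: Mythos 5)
Your reduction to the augmented non-delayed system, the domination $\tilde A(k)\ge\varepsilon(M_{0}+D(k))$, and the endgame (SIA $\to$ scrambling $\to$ Borel--Cantelli $\to$ ergodicity) all match the paper's proof. The genuine gap is the probabilistic step: you claim a uniform lower bound $\Prb\{S_{j}\mid\mathcal{F}_{jh}\}\ge\eta>0$ for the event that the sum over a \emph{single} window of length $h$ has a $(\delta/2)$-spanning tree. This is false in general. Take $n=3$, $h=1$, $\tau=0$, and let $B(k)$ be i.i.d., equal with probability $1/2$ to a stochastic matrix whose only off-diagonal entry is $b_{12}=1/2$ and with probability $1/2$ to one whose only off-diagonal entry is $b_{32}=1/2$. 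Then $\E B(k)$ has a $(1/4)$-spanning tree rooted at vertex $2$, so the hypothesis holds, yet no realization of a single window has a spanning tree at all: $\Prb\{S_{j}\mid\mathcal{F}_{jh}\}=0$. Your proposed iterated-conditioning fix does not repair this, for two reasons: (i) Lemma \ref{lemExpectationToProbability1} only lets you secure \emph{one} edge of the tree at a time with conditional probability $\ge\delta/2$, and (ii) after conditioning on that edge being realized, the conditional expectation of the \emph{remaining partial sum inside the window} carries no spanning-tree guarantee whatsoever --- the hypothesis only speaks about full windows of length $h$ started at an arbitrary $m$, and even for the next full window the guaranteed tree may be a completely different one on each atom of the new $\sigma$-algebra. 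So the assertion that ``a spanning-tree lower bound on the conditional expectation of the remaining partial sum survives at each intermediate step'' is exactly the unproved (and unprovable) point.

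The paper closes this gap with Lemma \ref{lemExpectationToProbability}: it strings together $(n-1)\Sp(n)$ consecutive windows of length $h$, at each stage decomposes the current event by which spanning tree the conditional expectation possesses, secures a single edge of that tree via Lemma \ref{lemExpectationToProbability1} (paying a factor $\delta/2$ in probability each time), and invokes a pigeonhole argument: among $(n-1)\Sp(n)$ stages some fixed tree type must recur at least $n-1$ times, so all of its edges get realized and the sum over the long horizon $h(n-1)\Sp(n)$ acquires a $(\delta/2)$-spanning tree with conditional probability at least $(\delta/2)^{(n-1)\Sp(n)}$. Once you replace your single-window event $S_{j}$ by this multi-window event, the rest of your argument goes through essentially as in the paper.
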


\begin{remark}
When $\tau=0$, the system has no time delays,and Theorem \ref{thmAppendixMain} reduces to Theorem 3.1 of \cite{LiuB_SICON_2011}. From this point of view, this result can be seen as an extension of that in \cite{LiuB_SICON_2011}.
\end{remark}

The proof of this theorem will be divided into several steps.

First, we will prove the following lemma, which shows that the conditional expectation of a spanning tree implies a positive conditional probability of existence of a spanning tree for a longer length of the summation.

\begin{lemma}\label{lemExpectationToProbability}
Let $\{A_{k},\mathcal{F}_{k}\}$ be an adapted random sequence of $n\times n$ stochastic matrices, if there exists $\delta>0$ such that for each $m$, $\E\{A_{m+1}|\mathcal{F}_{m}\}$ has a $\delta$ -spanning tree, then there exist $h>0$, $0<\delta'<\delta$ such that
\begin{eqnarray}\label{eqnProbabilitySpanningTree}
\Prb\{\sum_{k=m+1}^{m+h}A_{k} \textnormal{ has a $\delta'$-spanning tree }|\mathcal{F}_{m}\}>\delta'.
\end{eqnarray}
\end{lemma}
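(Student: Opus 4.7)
The plan is to convert the per-entry conditional-expectation bound into a joint conditional-probability bound that a specific spanning tree appears in the sum $\sum_{k=m+1}^{m+h}A_{k}$, using the finiteness of the set $\mathcal{T}$ of spanning trees on $n$ vertices as the key structural ingredient. First, because $\mathcal{T}$ is finite and $\E\{A_{m+1}|\mathcal{F}_{m}\}$ is $\mathcal{F}_{m}$-measurable, I would choose (by a measurable-selection / pigeonhole argument) an $\mathcal{F}_{m}$-measurable map $T^{*}(m):\Omega\to\mathcal{T}$ such that $T^{*}(m)(\omega)$ is almost surely a $\delta$-spanning tree of $\E\{A_{m+1}|\mathcal{F}_{m}\}(\omega)$. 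For each edge $e\in T^{*}(m)$ the corresponding entry $[A_{m+1}]_{e}\in[0,1]$ satisfies $\E\{[A_{m+1}]_{e}|\mathcal{F}_{m}\}\ge\delta$, and Lemma \ref{lemExpectationToProbability1} then yields the per-edge bound $\Prb\{[A_{m+1}]_{e}\ge\delta/2\,|\,\mathcal{F}_{m}\}\ge\delta/2$.

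The chief obstacle is that these $n-1$ per-edge lower bounds do not directly combine into a joint lower bound on the probability that \emph{all} $n-1$ tree edges of $A_{m+1}$ are simultaneously $\ge\delta/2$, since nothing in the hypothesis prevents the random mass on different tree edges from being strongly anti-correlated across realizations. To sidestep this I would accumulate ``good'' single edges across $h$ consecutive time steps rather than demand a full tree in one step. At each step $k\in\{m+1,\dots,m+h\}$ the tree $T^{*}(k-1)$ is $\mathcal{F}_{k-1}$-measurable, so I can adaptively choose a single edge $e_{k}\in T^{*}(k-1)$ and define the single-entry event $X_{k}=\{[A_{k}]_{e_{k}}\ge\delta/2\}$. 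A per-step application of Lemma \ref{lemExpectationToProbability1} gives $\Prb\{X_{k}\,|\,\mathcal{F}_{k-1}\}\ge\delta/2$, and Lemma \ref{lemProbabilityToLongerSeqence} then yields $\Prb\{X_{m+1}\cap\cdots\cap X_{m+h}\,|\,\mathcal{F}_{m}\}\ge(\delta/2)^{h}$.

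On the joint event $\bigcap_{k}X_{k}$, the $(e_{k})$-entry of $\sum_{k=m+1}^{m+h}A_{k}$ is at least $\delta/2$ for every $k$, so the support graph of the sum contains all selected edges with weight $\ge\delta/2$. The conclusion with $\delta'=\min\{(\delta/2)^{h},\delta/2\}$ then follows provided the adaptive selection $\{e_{k}\}$ always yields a collection containing a spanning tree. Since each $T^{*}(k-1)$ is itself a spanning tree, I would use a greedy rule: at each step pick an edge of $T^{*}(k-1)$ that reduces the number of (weakly-)connected components of the currently-collected subgraph. In the directed setting I would also combine this with a pigeonhole argument, taking $h$ of order $(n-1)|\mathcal{T}|$, so that some specific tree $T_{0}\in\mathcal{T}$ necessarily appears as $T^{*}(k-1)$ for at least $n-1$ of the $h$ steps, at which steps its $n-1$ distinct edges are picked in turn. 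Making this adaptive selection simultaneously $\mathcal{F}_{k-1}$-measurable and deterministically correct on every realization is where I expect the main technical effort to lie; everything else is then a routine application of Lemmas \ref{lemExpectationToProbability1} and \ref{lemProbabilityToLongerSeqence}.
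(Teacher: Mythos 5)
Your proposal follows essentially the same route as the paper: convert the conditional-expectation bound into a per-edge conditional-probability bound via Lemma \ref{lemExpectationToProbability1}, adaptively ($\mathcal{F}_{k-1}$-measurably) collect one tree edge per step, chain the bounds through the tower property to get $(\delta/2)^{h}$, and use the pigeonhole principle over the finitely many spanning-tree types with $h=(n-1)\Sp(n)$ so that some tree type recurs often enough for all $n-1$ of its edges to be collected; the measurable selection you flag as the main technical hurdle is handled in the paper by explicitly partitioning $\Omega$ into $\mathcal{F}_{k}$-measurable cells according to which spanning tree the conditional expectation contains. The only adjustment needed is to discard your greedy ``reduce the number of weakly connected components'' variant, which does not by itself guarantee a \emph{directed} spanning tree, and rely on the pigeonhole edge-selection rule, which is exactly the paper's strategy.
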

\begin{proof}
Denote $\Sp(n)$ the number of different types of spanning trees composed of $n$ vertices. For some fixed $m$, we decompose the probability space $\Omega$ with respect to $\mathcal{F}_{m}$, i.e., $S_{m}^{(i_{1})}\in \mathcal{F}_{m}$, $\Omega=\cup S_{m}^{(i_{1})}$, $i_{1}=1,2,\cdots,s_{1}$ where $s_{1}\le \Sp(n)$, and $S_{m}^{(i)}\cap S_{m}^{(j)}=\emptyset$ for $i\ne j$ such that on each $S_{m}^{(i)}$, $\E\{A_{m+1}~|~\mathcal{F}_{m}\}$ has a (fixed) specified spanning tree. Denote $\E\{A_{m+1}~|~\mathcal{F}_{m}\}_{S_{m}^{(i_{1})}}$ the restriction of $\E\{A_{m}~|~\mathcal{F}_{m}\}$ on ${S_{m}^{(i_{1})}}$, i.e.,
\begin{eqnarray*} \E\{A_{m+1}~|~\mathcal{F}_{m}\}_{S_{m}^{(i_{1})}}=\E\{A_{m+1}~|~\mathcal{F}_{m}\}\times{\bf 1}_{S_{m}^{(i_{1})}}.
\end{eqnarray*}

Let $\ST_{\delta}(\E\{A_{m+1}~|~\mathcal{F}_{m}\}_{S_{m}^{(i_{1})}})$ be a $\delta$-spanning tree contained in $\E\{A_{m+1}~|~\mathcal{F}_{m}\}_{S_{m}^{(i_{1})}}$, which is arbitrarily selected when more than one choices are available. For each $S_{m}^{(i_{1})}$, we pick an edge $(j,i)\in \mathcal{E}(\ST_{\delta}(\E\{A_{m+1}~|~\mathcal{F}_{m}\}_{S_{m}^{(i_{1})}}))$,
and let $S_{m+1}^{(i_{1})}=\{[A_{m+1}]_{ij}\ge \delta/2\}\cap S_{m}^{(i_{1})}$. Then $S_{m+1}^{(i_{1})}\in \mathcal{F}_{m+1}$, and from Lemma \ref{lemExpectationToProbability1},
\begin{eqnarray*}
\Prb\{S_{m+1}^{(i_{1})}~|~\mathcal{F}_{m}\}\ge \frac{\delta}{2}
\end{eqnarray*}
holds on $S_{m}^{(i_{1})}$.

Similarly, we decompose each $S_{m+1}^{(i_{1})}$ with respect to $\mathcal{F}_{m+1}$, i.e., $S_{m+1}^{(i_{1})}=\cup_{i_{2}}S_{m+1}^{(i_{1}i_{2})}$ with $S_{m+1}^{(i_{1}i_{2})}\in\mathcal{F}_{m+1}$ and $S_{m+1}^{(i_{1}i_{2})}\cap S_{m+1}^{(i_{1}i_{2}')}$ for $i_{2}, i_{2}'=1,2,\cdots,s_{2}$, $i_{2}\ne i_{2}'$, where $s_{2}\le \Sp(n)$ depends on the index $i_{1}$ such that $\E\{A_{m+2}~|~\mathcal{F}_{m+1}\}_{S_{m+1}^{(i_{1}i_{2})}}$ has a specified $\delta$-spanning tree on each $S_{m+1}^{(i_{1}i_{2})}$.

For each $S_{m+1}^{(i_{1}i_{2})}$, we pick an edge $(j,i)\in \mathcal{E}(\ST_{\delta}(\E\{A_{m+2}~|~\mathcal{F}_{m+1}\}_{S_{m+1}^{(i_{1}i_{2})}}))$ and let $S_{m+2}^{(i_{1}i_{2})}=\{[A_{m+2}]_{ij}\ge \delta/2\}\cap S_{m+1}^{(i_{1}i_{2})}$, then $S_{m+2}^{(i_{1}i_{2})}\in \mathcal{F}_{m+2}$ and from Lemma \ref{lemExpectationToProbability1},
\begin{eqnarray*}
\Prb\{S_{m+2}^{(i_{1}i_{2})}~|~\mathcal{F}_{m+1}\}\ge \frac{\delta}{2}
\end{eqnarray*}
holds on $S_{m+1}^{(i_{1}i_{2})}$.

Continuing this process, we can get a sequence:
\begin{eqnarray*}
S_{m}^{(i_{1})},S_{m+1}^{(i_{1})}, S_{m+1}^{(i_{1}i_{2})},S_{m+2}^{(i_{1}i_{2})},\cdots,S_{m+k-1}^{(i_{1}i_{2}\cdots i_{k})}, S_{m+k}^{(i_{1}i_{2}\cdots i_{k})}
\end{eqnarray*}
such that for $l=1,2,\cdots,k$, $S_{m+l}^{(i_{1}i_{2}\cdots i_{l})}\in \mathcal{F}_{m+l}$, $S_{m+l-1}^{(i_{1}i_{2}\cdots i_{l-1})}=\cup_{i_{l}} S_{m+l-1}^{(i_{1}i_{2}\cdots i_{l})}$, $\E\{A_{m+l}~|~\mathcal{F}_{m+l-1}\}_{S_{m+l-1}^{(i_{1}i_{2}\cdots i_{l})}}$ has a (fixed) spanning tree, and
\begin{eqnarray}\label{eqnChooseEdge}
S_{m+l}^{(i_{1}i_{2}\cdots i_{l})}=\{[A_{m+l}]_{ij}>\delta/2\}\cap S_{m+l-1}^{i_{1}i_{2}\cdots i_{l}},
\end{eqnarray}
where the edge $(j,i)\in \mathcal{E}(\ST_{\delta}(\E\{A_{m+l}~|~\mathcal{F}_{m+l-1}\}_{S_{m+l-1}^{(i_{1}i_{2}\cdots i_{l})}}))$.

For any fixed sequence $i_{1},i_{2},\cdots, i_{k}$, $S_{m+k}^{(i_{1}i_{2}\cdots i_{k})}\subseteq S_{m+k-1}^{(i_{1}i_{2}\cdots i_{k})}\subseteq S_{m+k-1}^{(i_{1}i_{2}\cdots i_{k-1})}\subseteq \cdots \subseteq S_{m+1}^{(i_{1})}$. We choose the edge $(j,i)$ in (\ref{eqnChooseEdge}) for each $S_{m+l}^{(i_{1}i_{2}\cdots i_{l})}$ in such way that if $\ST_{\delta}(\E\{A_{m+1}~|~\mathcal{F}_{m+l-1}\}_{S_{m+l-1}^{(i_{1}i_{2}\cdots i_{l})}})$ hasn't appeared in the sequence, $\ST_{\delta}(\E\{A_{m+1}~|~\mathcal{F}_{m}\}_{S_{m}^{(i_{1} )}})$, $\cdots$, $\ST_{\delta}(\E\{A_{m+1-1}~|~\mathcal{F}_{m+l-2}\}_{S_{m+l-2}^{(i_{1}i_{2}\cdots i_{l-1})}})$, then we choose $(j,i)\in \mathcal{E}(\ST_{\delta}(\E\{A_{m+1}~|~\mathcal{F}_{m+l-1}\}_{S_{m+l-1}^{(i_{1}i_{2}\cdots i_{l})}}))$ arbitrarily. Otherwise, we choose $(j,i)\in \mathcal{E}(\ST_{\delta}(\E\{A_{m+1}~|~\mathcal{F}_{m+l-1}\}_{S_{m+l-1}^{(i_{1}i_{2}\cdots i_{l})}}))$ which hasn't been chosen before when \\ $\ST_{\delta}(\E\{A_{m+1}~|~\mathcal{F}_{m+l-1}\}_{S_{m+l-1}^{(i_{1}i_{2}\cdots i_{l})}})$ appears in the sequence \\ $\ST_{\delta}(\E\{A_{m+1}~|~\mathcal{F}_{m}\}_{S_{m}^{(i_{1})}})$, $\cdots$, $\ST_{\delta}(\E\{A_{m+1-1}~|~\mathcal{F}_{m+l-2}\}_{S_{m+l-2}^{(i_{1}i_{2}\cdots i_{l-1})}})$ if there still exists such an edge.

Since there at most $\Sp(n)$ different types of spanning trees, and each spanning tree has $n-1$ edges, it is obvious when $k=(n-1)\Sp(n)$, the sets
\begin{eqnarray*}
S_{m+k}^{(i_{1}i_{2}\cdots i_{k})}\subseteq \{\sum_{l=1}^{k}A_{m+l} \textnormal{ has $\delta/2$-spanning tree} \}.
\end{eqnarray*}
This is because for each fixed sequence $i_{1}$, $\cdots$, $i_{k}$, at least one type of spanning trees has appeared at least $n-1$ times, thus by the edge chosen strategy, each of its edges has been chosen at least once.

So we have
\begin{eqnarray}
&&\Prb\{\sum_{l=1}^{k}A_{m+l} \textnormal{ has $\delta/2$-spanning tree}~|~\mathcal{F}_{m}\}\\
&\ge & \Prb\{\cup_{i_{1},i_{2},\cdots,i_{k}}S_{m+k}^{(i_{1}i_{2}\cdots i_{k})}~|~\mathcal{F}_{m}\}\nonumber\\
&=&\E\{\sum_{i_{1},i_{2},\cdots,i_{k}}{\bf 1}_{S_{m+k}^{(i_{1}i_{2}\cdots i_{k})}}~|~\mathcal{F}_{m}\}\nonumber\\
&=&\E\{\E\{\sum_{i_{1},i_{2},\cdots,i_{k}}{\bf 1}_{S_{m+k}^{(i_{1}i_{2}\cdots i_{k})}}~|~\mathcal{F}_{m+k-1}\}~|~\mathcal{F}_{m}\}\label{stepIneq1}\\
&\ge&\frac{\delta}{2}\E\{\sum_{i_{1},i_{2},\cdots,i_{k}}{\bf 1}_{S_{m+k-1}^{(i_{1}i_{2}\cdots i_{k})}}~|~\mathcal{F}_{m}\}\label{stepEq1}\\
&=&\frac{\delta}{2}\E\{\sum_{i_{1},i_{2},\cdots,i_{k-1}}{\bf 1}_{S_{m+k-1}^{(i_{1}i_{2}\cdots i_{k-1})}}~|~\mathcal{F}_{m}\}\label{stepEq2}\\
&\ge&\cdots\nonumber\\
&\ge&(\frac{\delta}{2})^{k-1}\E\{\sum_{i_{1}}{\bf 1}_{S_{m+1}^{(i_{1})}}~|~\mathcal{F}_{m}\}\nonumber\\
&\ge&(\frac{\delta}{2})^{k}\E\{\sum_{i_{1}}{\bf 1}_{S_{m}^{(i_{1})}}~|~\mathcal{F}_{m}\}\nonumber\\
&=&(\frac{\delta}{2})^{k}\nonumber,
\end{eqnarray}
The inequality from (\ref{stepIneq1}) to (\ref{stepEq1}) is due to the fact
\begin{eqnarray*}
\Prb\{S_{m+k}^{(i_{1}i_{2}\cdots i_{k})}~|~\mathcal{F}_{m+k-1}\}\ge \delta/2
\end{eqnarray*}
on $S_{m+k-1}^{(i_{1}i_{2}\cdots i_{k})}\in \mathcal{F}_{m+k-1}$, which implies
\begin{eqnarray*}
\E\{{\bf 1}_{S_{m+k}^{(i_{1}i_{2}\cdots i_{k})}}~|~\mathcal{F}_{m+k-1}\}\ge \frac{\delta}{2} {\bf 1}_{S_{m+k-1}^{(i_{1}i_{2}\cdots i_{k})}}.
\end{eqnarray*}
The equality from (\ref{stepEq1}) to (\ref{stepEq2}) is due to the fact that $S_{m+k-1}^{(i_{1}i_{2}\cdots i_{k-1})}=\cup_{i_{k}}S_{m+k-1}^{(i_{1}i_{2}\cdots i_{k})}$,
and $S_{m+k-1}^{(i_{1}i_{2}\cdots i_{k-1}j)}\cap S_{m+k-1}^{(i_{1}i_{2}\cdots i_{k-1}j')}=\emptyset$ for $j\ne j'$, which implies
\begin{eqnarray*}
{\bf 1}_{S_{m+k-1}^{(i_{1}i_{2}\cdots i_{k-1})}}=\sum_{i_{k}}{\bf 1}_{S_{m+k-1}^{(i_{1}i_{2}\cdots i_{k-1}i_{k})}}
\end{eqnarray*}
for each fixed sequence $i_{1}$, $i_{2}$, $\cdots$, $i_{k-1}$.

Let $h=(n-1)\Sp(n)$, and $\delta'=(\delta/2)^{h}$, then (\ref{eqnProbabilitySpanningTree}) holds. The proof is completed.
\end{proof}

Now we come to the proof of Theorem \ref{thmAppendixMain}.

{\em Proof of Theorem \ref{thmAppendixMain}}

Denote $y_{k}=[x(k), x(k-1),\cdots, x(k-\tau)]^{\top}$, then we have:
\begin{eqnarray}\label{eqnHighDimension}
y_{k+1}=C_{k}y_{k},
\end{eqnarray}
where
\begin{eqnarray*}
C_{k}=\left[\begin{array}{ccccc}
A^{0}(k)& A^{1}(k)& \cdots & A^{\tau-1}(k)& A^{\tau}(k)\\
I&0&\cdots&0& 0\\
0& I & \cdots &0&0\\
\vdots& \vdots & \ddots & \vdots&\vdots\\
0& 0& \cdots & I &0
\end{array}
\right].
\end{eqnarray*}
Let $B_{k'}'=\sum_{k=(k'-1)h+1}^{k'h}B_{k}$, and $\mathcal{F}_{k'}'=\mathcal{F}_{k'h}$. Then $\{B_{k}',\mathcal{F}_{k}'\}$ also forms an adapted sequence, and
$\E\{B_{m+1}'~|~\mathcal{F}_{m}'\}$ has a $\delta$-spanning tree.
From Lemma \ref{lemExpectationToProbability}, there exists $\delta'>0$ such that
\begin{eqnarray*}
\Prb\Big\{\sum_{k=m+1}^{m+(n-1)\Sp(n)}B_{k}'\textnormal{ has a $\delta'$-spanning tree}|~\mathcal{F}_{m}'\Big\}>\delta'.
\end{eqnarray*}

Since
\begin{eqnarray*}
C_{k}&=&\left[\begin{array}{ccccc}
\diag(A^{0}(k))& 0& \cdots & 0& 0\\
I&0&\cdots&0& 0\\
0& I & \cdots &0&0\\
\vdots& \vdots & \ddots & \vdots&\vdots\\
0& 0& \cdots & I &0
\end{array}
\right]
+\left[\begin{array}{ccccc}
\overline{\diag}(A^{0}(k))& A^{1}(k)& \cdots & A^{\tau-1}(k)& A^{\tau}(k)\\
0&0&\cdots&0& 0\\
0& 0 & \cdots &0&0\\
\vdots& \vdots & \ddots & \vdots&\vdots\\
0& 0& \cdots & 0 &0
\end{array}
\right]\\
&\ge&\delta M_{0}+D_{k}\ge \delta(M_{0}+D_{k}),
\end{eqnarray*}
where $D_{k}$ refers to the second matrix on the righthand side of the first line.
From Lemma \ref{lemProdHighDimensionSpanningTree}, Corollary \ref{corollaryProdHighDimensionSpanningTree}, and Remark \ref{remProdHighDimensionSpanningTree}, Remark \ref{remProductSIA}, there exist $\delta''(p)\in (0,\delta')$, $p\ge 1$ such that
\begin{eqnarray*}
\Prb\bigg\{\prod_{k=m+1}^{m+p(n-1)\Sp(n)}C_{k}\textnormal{ is $\delta''(p)$-SIA},p=1,2,\cdots~|~\mathcal{F}_{m}'\bigg\}>\delta'.
\end{eqnarray*}
Let $C'_{k'}=\prod_{k=(k'-1)(n-1)\Sp(n)+1}^{k'(n-1)\Sp(n)}C_{k}$, $\mathcal{F}_{k'}''=\mathcal{F}_{k'(n-1)\Sp(n)}'$, then $\{C_{k}',\mathcal{F}_{k}''\}$ forms an adapted sequence and
\begin{eqnarray*}
\Prb\Big\{\prod_{k=1}^{p}C_{m+k}'\textnormal{ is $\delta''(p)$-SIA },p=1,2,\cdots~|~\mathcal{F}_{m}''\Big\}>\delta'.
\end{eqnarray*}

Let $k_{n}={\bf n}(n)+1$, then from Lemma \ref{lemSIAtoScrambling} and Lemma \ref{lemProbabilityToLongerSeqence},
\begin{eqnarray*}
&&\Prb\Big\{\prod_{i=1}^{k_{n}}C_{m+i}' \textnormal{ is $\delta''(1)^{k_{n}}$-scrambling} |~\mathcal{F}_{m}''\Big\}\\
&\ge&\Prb\Big\{\prod_{k=1}^{p}C_{m'+k}' \textnormal{ is $\delta''(p)$-SIA}, ~m'=m,\cdots,m+k_{n}-1,p=1,2,\cdots.|~\mathcal{F}_{m}''\Big\} \\
&>&\delta'^{k_{n}}.
\end{eqnarray*}
Let $C_{m}''=\prod_{k=(m-1)k_{n}+1}^{mk_{n}}C_{k}'$, and $\mathcal{F}_{m}'''=\mathcal{F}_{mk_{n}}''$.
Then $\{C_{k}'',\mathcal{F}_{k}'''\}$ forms an adapted sequence, and
\begin{eqnarray*}
\Prb\{C_{m+1}'' \textnormal{ is $\delta''(1)^{k_{n}}$-scrambling}~|~\mathcal{F}_{m}'''\}>\delta'^{k_{n}}.
\end{eqnarray*}
From the Second Borel-Cantelli Lemma (Lemma \ref{lemSecondBorelCantelli}), this implies
\begin{eqnarray*}
\Prb\{C_{k}'' \textnormal{ is $\delta''(1)^{k_{n}}$-scrambling infinitely often}\}=1.
\end{eqnarray*}
From Lemma \ref{lemContractionOfProduct} and Remark \ref{remErgodicityOfProducts}, we have
\begin{eqnarray*}
\Prb\Big\{\lim_{k\to+\infty}\Delta\Big(\prod_{m=1}^{k}C_{m}\Big)=0\Big\}=1.
\end{eqnarray*}
This implies the system (\ref{eqnHighDimension}) reaches consensus almost surely. On the other hand it is not difficult to show that consensus of (\ref{eqnHighDimension}) can imply consensus of (\ref{eqnDiscrete}). And the proof is completed.

From Theorem \ref{thmAppendixMain}, we can have the following two corollaries. The first one is for deterministic case.

\begin{corollary}\label{corollaryAppendixDeterministic}
Let $\{A^{0}(k),\cdots,A^{\tau}(k)\}$ be a deterministic sequence with $A^{0}(k)\ge \delta I$ for some $\delta>0$, and $[A^{i}(k)]_{jj}=0$ for $i=1,2,\cdots,\tau$, $j=1,\cdots,n$. If there exists $h>0$ such that $\sum_{k=m+1}^{m+h}B(k)$ has a $\delta$-spanning tree, then the system \eqref{eqnDiscrete} will reach consensus.
\end{corollary}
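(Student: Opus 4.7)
The plan is to deduce this corollary directly from Theorem \ref{thmAppendixMain} by viewing the deterministic sequence as a trivial adapted process. First I would endow the sequence $\{A^{0}(k),\ldots,A^{\tau}(k)\}$ with the trivial filtration $\mathcal{F}_{k}=\{\emptyset,\Omega\}$ for all $k$, so that each (deterministic) matrix is measurable with respect to $\mathcal{F}_{k}$ and $\{A^{0}(k),\ldots,A^{\tau}(k),\mathcal{F}_{k}\}$ is an adapted process in the sense of the theorem. The zero-self-delay conditions $[A^{l}(k)]_{jj}=0$ for $l\ge 1$ together with $A^{0}(k)\ge\delta I$ (which forces $a_{ii}^{0}(k)\ge\delta>0$) match the standing assumptions placed on the iteration \eqref{eqnDiscrete}.

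Next I would check the two structural hypotheses of Theorem \ref{thmAppendixMain}. For the uniform diagonal lower bound, since each $A^{l}(k)$ is entrywise nonnegative and $A^{0}(k)\ge\delta I$ by hypothesis, we get
$$B(k)=\sum_{l=0}^{\tau}A^{l}(k)\ge A^{0}(k)\ge \delta I.$$
For the spanning-tree-in-conditional-expectation condition, because the sequence is deterministic and each $\mathcal{F}_{m}$ is trivial,
$$\E\Big\{\sum_{k=m+1}^{m+h}B(k)\,\Big|\,\mathcal{F}_{m}\Big\}=\sum_{k=m+1}^{m+h}B(k),$$
which by hypothesis has a $\delta$-spanning tree.

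Applying Theorem \ref{thmAppendixMain} then yields almost sure consensus of \eqref{eqnDiscrete}. Since the dynamics are deterministic, an almost-sure statement on the trivial probability space is an outright pointwise statement, so consensus holds as claimed. I do not foresee any genuine obstacle: this corollary is a direct specialization, and the only bookkeeping is to observe that $A^{0}(k)\ge\delta I$ upgrades to $B(k)\ge\delta I$ via nonnegativity of the remaining $A^{l}(k)$, and that in the deterministic setting the conditional expectation collapses to the sum itself.
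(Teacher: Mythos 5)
Your proposal is correct and matches the paper's (implicit) argument: the paper states this corollary without proof as an immediate specialization of Theorem \ref{thmAppendixMain}, and your choice of the trivial filtration, the observation that $B(k)=\sum_{l=0}^{\tau}A^{l}(k)\ge A^{0}(k)\ge\delta I$ by nonnegativity, and the collapse of the conditional expectation to the deterministic sum are exactly the bookkeeping needed. The final remark that almost sure consensus of a deterministic system is just consensus closes the argument.
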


The second one is for a random sequence that will become independent after a fixed length of time.

\begin{corollary}\label{corollaryAppendixIndependent}
Let $\{A^{0}(k),\cdots,A^{\tau}(k)\}$ be a random sequence with $A^{0}(k)\ge \delta I$ for some $\delta>0$, and $[A^{i}(k)]_{jj}=0$ for $i=1,2,\cdots,\tau$, $j=1,\cdots,n$. And there exists $N>0$ such that $A^{0}(k),\cdots,A^{\tau}(k)$ is independent of $A^{0}(k'),\cdots,A^{\tau}(k')$ whenever $k'\ge k+N$. If there exists $h>0$ such that $\sum_{k=m+1}^{m+h}\E B(k)$ has a $\delta$-spanning tree, then the system \eqref{eqnDiscrete} will reach consensus almost surely.
\end{corollary}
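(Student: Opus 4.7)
The plan is to reduce Corollary \ref{corollaryAppendixIndependent} to Theorem \ref{thmAppendixMain} by exploiting the $N$-step independence to convert the unconditional expectation hypothesis into a conditional one. The natural filtration to use is
\[
\mathcal{F}_{k}=\sigma\bigl\{A^{0}(j),A^{1}(j),\ldots,A^{\tau}(j):\,j\le k\bigr\}.
\]
With respect to this filtration, $\{A^{0}(k),\ldots,A^{\tau}(k),\mathcal{F}_{k}\}$ is automatically an adapted sequence, and by the independence hypothesis, for every $k'\ge k+N$ the block $\bigl(A^{0}(k'),\ldots,A^{\tau}(k')\bigr)$ is independent of $\mathcal{F}_{k}$, so that
\[
\E\bigl\{B(k')\,\big|\,\mathcal{F}_{k}\bigr\}=\E B(k'),\qquad k'\ge k+N.
\]

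Next I would verify the two hypotheses of Theorem \ref{thmAppendixMain}. The lower bound $B(k)\ge \delta I$ is immediate, since $B(k)=\sum_{l=0}^{\tau}A^{l}(k)\ge A^{0}(k)\ge \delta I$ by assumption. For the spanning-tree condition, set $h'=N+h$. For any $m$, the sum $\sum_{k=m+1}^{m+h'}B(k)$ contains the tail block $\sum_{k=m+N+1}^{m+N+h}B(k)$, whose entries are independent of $\mathcal{F}_{m}$. Taking conditional expectation and using nonnegativity of the other terms,
\[
\E\Bigl\{\sum_{k=m+1}^{m+h'}B(k)\,\Big|\,\mathcal{F}_{m}\Bigr\}\;\ge\;\E\Bigl\{\sum_{k=m+N+1}^{m+N+h}B(k)\,\Big|\,\mathcal{F}_{m}\Bigr\}\;=\;\sum_{k=m+N+1}^{m+N+h}\E B(k).
\]
The right-hand side has a $\delta$-spanning tree by the hypothesis of the corollary (applied with base index $m+N$ in place of $m$). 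Since adding nonnegative matrices can only increase entries, the dominating conditional expectation on the left also has a $\delta$-spanning tree.

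Having verified both hypotheses with $(h',\delta)$ in place of $(h,\delta)$, I would invoke Theorem \ref{thmAppendixMain} to conclude that the system \eqref{eqnDiscrete} reaches consensus almost surely. The only subtle point, and what I expect to be the main (though very mild) obstacle, is checking that the hypothesis of Theorem \ref{thmAppendixMain} is really uniform in the base index $m$: in the corollary's statement the existence of $h$ with $\sum_{k=m+1}^{m+h}\E B(k)$ having a $\delta$-spanning tree is understood uniformly in $m$, matching the uniform-in-$m$ formulation of Theorem \ref{thmAppendixMain}. The rest of the argument is a direct unpacking of the definition of conditional expectation together with monotonicity of the spanning-tree property under addition of nonnegative matrices.
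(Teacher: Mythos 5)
Your proposal is correct and follows essentially the same route as the paper's own proof: the same natural filtration, the same shift to the window of length $N+h$, and the same monotonicity argument $\E\{\sum_{k=m+1}^{m+N+h}B(k)\,|\,\mathcal{F}_{m}\}\ge\sum_{k=m+N+1}^{m+N+h}\E B(k)$ followed by an appeal to Theorem \ref{thmAppendixMain}. No gaps.
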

\begin{proof}
Let $\mathcal{F}_{k}=\sigma\{A^{0}(m),\cdots,A^{\tau}(m),~m\le k\}$ be the $\sigma$-algebra formed by $A^{0}(m),\cdots,A^{\tau}(m)$, $m\le k$. Then $\{A^{0}(k),\cdots,A^{\tau}(k),\mathcal{F}_{k}\}$ forms an adapted process. From the assumption on $\{A^{0}(k),\cdots,A^{\tau}(k)\}$, $A^{0}(k'),\cdots,A^{\tau}(k')$ is independent of $\mathcal{F}_{k}$ whenever $k'\ge k+N$. Since $B(k)\ge 0$, we have
$$
\E\{\sum_{m=k+1}^{k+N+h}B(m)|\mathcal{F}_{k}\}\ge \E\{\sum_{m=k+N+1}^{k+N+h}B(m)|\mathcal{F}_{k}\}=\E\{\sum_{m=k+N+1}^{k+N+h}B(m)\}
=\sum_{m=k+N+1}^{k+N+h}\E B(m).
$$
Thus, $\E\{\sum_{m=k+1}^{k+N+h}B(m)|\mathcal{F}_{k}\}$ has a $\delta$-spanning tree and the conclusion follows from Theorem \ref{thmAppendixMain}.
\end{proof}

\end{document}